\newcommand{\M}{\mathcal{M}}
\newcommand{\set}[1]{\left\{#1\right\}}
\newcommand{\yd}{y^\delta}
\newcommand{\xdag}{x^\dagger}
\newcommand{\xad}{x_{\alpha}^\delta}
\newcommand{\1}{\ell^1}
\newcommand{\2}{\ell^2}
\newcommand{\3}{\ell^\infty}
\newcommand{\sgn}{\mathop{\mathrm{sgn}}}
\newcommand{\xbe}{x_{\alpha,\eta}^\delta}
\def\IN{\mathbb{N}}
\newtheorem{thm}{Theorem}
\newtheorem{lemma}{Lemma}
\newtheorem{assumption}{Assumption}
\theoremstyle{definition}
\theoremstyle{remark}
\newtheorem{remark}{Remark}
\newtheorem{xmpl}{Example}
\title[Regularization with $\ell^0$-term and a convex penalty]{Tikhonov regularization with $\ell^0$-term complementing a convex penalty: $\ell^1$-convergence under sparsity constraints}
\author{Wei Wang}
\address{College of Mathematics, Physics and Information Engineering, Jiaxing University, Zhejiang, China 314001}
\email{weiwang@mail.zjxu.edu.cn}
\author{Shuai Lu}
\address{School of Mathematical Sciences, Fudan University, Shanghai, China 200433}
\email{slu@fudan.edu.cn}
\author{Bernd Hofmann}
\address{Faculty of Mathematics, Chemnitz University of Technology,
  09107 Chemnitz,  Germany}
\email{bernd.hofmann@mathematik.tu-chemnitz.de}
\author{Jin Cheng}
\address{School of Mathematical Sciences, Fudan University, Shanghai, China 200433}
\email{jcheng@fudan.edu.cn}
\date{\today}
\begin{document}
\begin{abstract}
Measuring the error by an $\ell^1$-norm, we analyze under sparsity assumptions an $\ell^0$-regularization approach, where the penalty in the Tikhonov functional is complemented by a general stabilizing convex functional. In this context, ill-posed operator equations $Ax=y$ with an injective and bounded linear operator $A$ mapping between $\ell^2$ and a Banach space $Y$ are regularized. For sparse solutions, error estimates as well as linear and sublinear convergence rates are derived based on a variational inequality approach, where the regularization parameter can be chosen either a priori in an appropriate way or a posteriori by the sequential discrepancy principle. To further illustrate the balance between the $\ell^0$-term and the complementing convex penalty, the important special case of the $\ell^2$-norm square penalty is investigated showing explicit dependence between both terms. Finally, some numerical experiments verify and illustrate the sparsity promoting properties of corresponding regularized solutions.

\end{abstract}

\maketitle

\section{Introduction}
\label{se1}

Variational sparsity regularization has gained significant attention in the past years, because of its wide field of applications in mathematical methods for imaging, natural sciences and finance. We refer for details to the corresponding chapters in \cite{Scherz11,Scherzetal09,SKHK12} and diverse research papers like \cite{BoHo10,BoHo13,BreLor09,BDDML09,Kindermann16,Lorenz08,Ramlau08,RamRes10,RamTe10}. There are many papers on $\ell^1$-regularization of ill-posed
operator equations under sparsity constraints using the $\ell^1$-norm as penalty term in a Tikhonov regularization approach (see \cite{BFH13} and references therein), but the variety of sparsity promoting penalties is much broader. In the elastic-net technique (cf., e.g.,~\cite{ChenHofZou17,JLS09,ZH05}) a multi-parameter version is implemented which uses a linear combination of $\ell^1$-norm and $\ell^2$-norm square. On the other hand, $\ell^p$ seminorms with $p<1$ strongly support the
sparsity selection of approximate solutions in variational regularization, but the penalties are no longer convex for such $p$. Moreover, the extremal case of using an $\ell^0$-term alone fails, because the required stabilizing effect of the penalty for overcoming the ill-posedness is completely lost in that case. Therefore, in \cite{WLMC13} a linear combination of the $\ell^0$-term with the stabilizing $\ell^2$-norm square was suggested,
which extends the idea of elastic-net regularization. Affirmative analytical and numerical results presented in \cite{WLMC13} showed the utility of such an approach. For obtaining convergence rates a replacement of
the common use of adapted source conditions in Banach space regularization (cf.~\cite{BurOsh04,Resm05}) by the variational inequality approach \cite{HKPS07,Flemmingbuch12,Fle13,HofYam10} was successful in this context. Unfortunately, such kind
of regularization including an $\ell^0$-term is often connected with an enormous computational effort. Nevertheless, it is worthwhile to consider the error analysis of this approach in all facets, and
we complement the analysis in this paper by deriving $\ell^1$-norm error results under the extended situation that general convex functionals are added to the  $\ell^0$-term in the penalty.

Let $\widetilde A \in \mathcal{L}(\widetilde X,Y)$ be an {\sl injective} and bounded linear operator mapping between the infinite dimensional separable Hilbert space $\widetilde X$ with inner product $\langle \cdot,\cdot\rangle_{\widetilde X}$ and norm $\|\cdot\|_{\widetilde X}$ and the infinite dimensional Banach space $Y$ with norm $\|\cdot\|$
possessing the dual $Y^*$ with norm $\|\cdot\|_{Y^*}$. The symbol $\langle \cdot, \cdot \rangle_{Y^* \times Y}$ denotes the corresponding dual pairing
between $Y$ and its dual.
We assume that the range of $\widetilde A$ is not closed in $Y$, i.e.~${\rm Range}(\widetilde A) \not= \overline {{\rm Range}(\widetilde A)}^{\,Y}$,  which is equivalent to the fact that the Moore-Penrose inverse
${\widetilde A}^\dagger: {\rm Range}(\widetilde A)\oplus {\rm Range}(\widetilde A)^\perp \subset Y \to \widetilde X$ is an unbounded linear operator.
Hence the operator equation
\begin{equation} \label{eq:tildeeq}
\widetilde A\,x\,=\,y, \qquad x \in \widetilde X,\quad y \in Y,
\end{equation}
expressing the model of a linear inverse problem with the forward operator $\widetilde A$, is ill-posed. Consequently, for noisy data $\yd \in Y$ replacing $y \in {\rm Range}(\widetilde A)$  in (\ref{eq:tildeeq}) and satisfying the inequality
\begin{equation}\label{eq:noise}
\|y-y^\delta\| \leq \delta,
\end{equation}
with given noise level $\delta>0$, solutions need not exist. When they exist
the solution may be strongly perturbed even if $\delta$ is small.

For a prescribed orthonormal basis $\{u_i\}_{i=1}^\infty$ in $\widetilde X$ with elements
$\widetilde x= \sum \limits_{i=1}^\infty x_i u_i$ we consider the infinite sequences $x=\{x_i\}_{i=1}^\infty$  collecting the Fourier coefficients $x_i:=\langle \widetilde x,u_i \rangle_{\widetilde X}$.
Our focus is on \emph{sparse solutions} $\widetilde \xdag$
to equation (\ref{eq:tildeeq}) in the sense that  $\xdag=\{x^\dagger_i\}_{i=1}^\infty:=\{\langle \widetilde \xdag,u_i \rangle_{\widetilde X}\}_{i=1}^\infty \in \ell^0$, where as usual the symbol $\ell^0$
denotes the set of infinite sequences with a finite number of nonzero components and
we set in this context $\|x\|_{\ell^0}:=\sum \limits_{i=1}^\infty \sgn(|x_i|)$, for
$$\sgn(z):={\scriptscriptstyle  \begin{cases}\;\; 1 \; \mbox{if} \;\; z>0,\\\;\; 0 \; \mbox{if} \;\; z=0, \\ -1 \; \mbox{if} \;\; z<0. \end{cases} }$$

Because the sparsity can be lost when the basis changes,
the choice of the basis $\{u_i\}_{i=1}^\infty$ must be done appropriately. Then we can consider instead of (\ref{eq:tildeeq}) the ill-posed equation
\begin{equation} \label{eq:opeq}
 A\,x\,=\,y, \qquad x \in X:=\2,\quad y \in Y,
\end{equation}
with uniquely determined solution $\xdag$, by exploiting the unitary synthesis operator
$L: \2 \to \widetilde X$ defined as $\widetilde x= Lx$ and the linear bounded and injective composition operator
$A:=\widetilde A \circ L$ with non-closed range, i.e.~${\rm Range}(A) \not= \overline {{\rm Range}(A)}^{\,Y}$.
For sequences $x=\{x_i\}_{i=1}^\infty$ we will denote by $\|x\|_{\ell^p}:=\left( \sum \limits_{i=1}^\infty |x_i|^p\right)^{1/p}$ the norms in the Banach spaces $\ell^p,\;1 \leq p<\infty$, and by $\|x\|_{\ell^\infty}:=\sup \limits_{k \in \mathbb{N}} |x_i|$  the norm in  $\ell^\infty$. For simplicity we use the symbol
$\langle \cdot,\cdot \rangle$ without subscript for the inner product in $\2$.

To find stable approximate solutions to equation (\ref{eq:opeq}) based on noisy data $\yd \in Y$ we use
the Tikhonov type regularization
\begin{equation} \label{eq:Tiktwo}
\Phi_{\alpha}^\delta(x):= \frac{1}{q}\|Ax-y^\delta\|^q + \alpha\,\Omega(x) \to \min, \quad \mbox{subject to}\quad x \in X=\ell^2,
\end{equation}
with a regularization parameter $\alpha>0$ and minimizers denoted by $\xad$, for a penalty functional $\Omega$ of the form
\begin{equation} \label{eq:Omegatwo}
\Omega(x):=  \|x\|_{\ell^0}+ \mathcal{R}(x),
\end{equation}
where a proper, non-negative, {\sl convex} and {\sl stabilizing} functional $\mathcal{R}$ complements the $\ell^0$-term in the penalty.
For the exponent $q$ in the misfit term we restrict to the interval $1<q<\infty$ and exclude the case $q=1$ in order to avoid exact penalization (cf.~\cite{AnzHofMat13}).
We note that $\Omega$ from (\ref{eq:Omegatwo}) is a non-convex functional due to the non-convexity of the contained term $\|\cdot\|_{\ell^0}$. For extended discussion on non-convex minimization we refer to \cite{Grasm10,Gras10}.

As an important special case we have in mind $\mathcal{R}(x):=\frac{\eta}{2}\|x\|^2_{\2}$, for a prescribed constant $\eta>0$, such that
\begin{equation} \label{eq:Omegaeta}
\Omega(x):=  \|x\|_{\ell^0}+ \frac{\eta}{2}\|x\|^2_{\2}.
\end{equation}
Regularized solutions for that penalty have been comprehensively discussed in \cite{WLMC13}. It has been shown ibid that, for every $\eta>0$, the functional $\Omega$ from (\ref{eq:Omegaeta}) is {\sl stabilizing} and
{\sl lower semi-continuous} on sublevel sets of $\ell^0$ which belong to a fixed ball in $X$. Consequently, the minimizers $\xad \in \ell^0 \subset X$ of (\ref{eq:Tiktwo}) exist for all $\alpha>0$ and are stable
with respect to further perturbations of the data $\yd$. Measuring the error by an $\ell^2$-norm, assertions on variational source conditions and convergence including rates
of the regularized solutions $\xad$ to the exact solution $\xdag \in \ell^0$  have also been given in \cite{WLMC13}.

The goal of this paper is to complement the results presented in \cite{WLMC13} from $\ell^2$-norm errors to $\ell^1$-norm errors in the case (\ref{eq:Omegaeta}) and to
extend assertions to penalties (\ref{eq:Omegatwo}) with general convex functionals $\mathcal{R}$. For sparse solutions $\xdag \in \ell^0$  and wide classes of functionals $\mathcal{R}$ we will verify the linear convergence rate
\begin{equation} \label{eq:linearrate}
\|\xad-\xdag\|_{\1}=\mathcal{O}(\delta) \quad \mbox{as} \quad \delta \to 0,
\end{equation}
provided that the regularization parameters $\alpha>0$ are chosen appropriately. Precisely, error estimates and linear convergence rates are derived based on a variational source condition approach, where the regularization parameter can be chosen either a priori in an appropriate way or a posteriori by the sequential discrepancy principle. If the convex functional $\mathcal{R}$ is chosen inappropriate, then at least sublinear convergence
rates can be proven based on the method of approximate source conditions \cite{HeinHof09,Hof06}.

The paper is organized as follows: In Section \ref{se2} we collect basic assumptions and provide a variational inequality with respect to the $\ell^1$-norm which does not appear in the penalty functional $\Omega(x)$. Error estimates for an {\it a priori} parameter choice rule and the sequential discrepancy principles are carried out in Section~\ref{se3} for general stabilizing functionals $\mathcal{R}(x)$.
To further illustrate the balance between the $\ell^0$-term and the complementing convex penalty, the important special case of the $\ell^2$-norm square penalty is investigated in Section \ref{se4} showing explicit dependence between both penalty terms.
Finally some numerical examples and extended discussion are presented in Section~\ref{se5} to verify the sparsity promoting properties of our proposed regularization schemes and other related aspects.

\section{A variational source condition approach}\label{se2}
We first present the main assumptions below.
\begin{assumption} \label{ass:basic}
\begin{itemize} \item[]
\item[(a)] Let $A: X=:\ell^2 \to Y$ in equation (\ref{eq:opeq}) be an injective and bounded linear operator mapping from the Hilbert space $X$ to the Banach
space $Y$ with an adjoint operator  $A^*: Y^* \to X$. Moreover, let ${\rm Range}(A) \not= \overline {{\rm Range}(A)}^{\,Y}$ such that the equation (\ref{eq:opeq}) is ill-posed.
\item[(b)] Let $y \in {\rm Range}(A)$ be given such that the uniquely determined solution $\xdag$ to equation (\ref{eq:opeq}) is sparse, i.e.~$\xdag \in \ell^0$. Moreover, let \[I := \{i\in \mathbb{N}:\, x_i^\dag \neq 0\} \not=\emptyset \] be the finite index set of non-zero components of $x^\dag$. In this context, we set $|I|:={\rm card}(I)=\|\xdag\|_{\ell^0}>0$ and denote by $\Sigma(\xdag):={\rm span}(\{e^{(i)}\}_{i \in I})$ the corresponding $|I|$-dimensional subspace of non-zero components,  where $e^{(i)}:=(0,0,...,0,1,0,...)$, with $1$ at the $i$-th position for $i \in \IN$, designate  the elements of standard orthonormal basis in $\ell^2$.
\item[(c)] Let $\mathcal{R}: X=\ell^2 \to [0,\infty]$ be a convex and lower semi-continuous functional, where  we denote by $\mathcal{D}(\mathcal{R})=\{x \in X: \mathcal{R}(x)<\infty\}$ the proper domain of $\mathcal{R}$ and by  $\partial \mathcal{R}(x) \subseteq X$ the subdifferential of the functional $\mathcal{R}$
at the point $x \in \mathcal{D}(\mathcal{R})$.
\item[(d)] The functional $\mathcal{R}$ is assumed to be  {\rm stabilizing} in the sense
that the sublevel sets $$\mathcal{S}^\mathcal{R}(c):=\{x \in X:\, \mathcal{R}(x) \le c\} $$ are weakly sequentially pre-compact in $X$ for all $c \ge 0$. This means that every
infinite sequence $\{x_n\}_{n=1}^\infty \subset \mathcal{S}^\mathcal{R}(c)$ possesses a subsequence which has a weak limit in $X$.
\end{itemize}
\end{assumption}

\begin{remark} \label{rem:ass2}{\rm
In the Hilbert space $X=\ell^2$ item (d) means that the functional $\mathcal{R}$ is {\sl weakly coercive}, i.e.~$\|x\|_{\2} \to \infty$ implies $\mathcal{R}(x) \to \infty$. For given $c \ge 0$ there
is a radius $r(c)>0$ such that $\sup \limits_{x \in \mathcal{S}^\mathcal{R}(c)}\|x\|_{\2} \le r(c)$.
} \hfill\fbox{}
\end{remark}

\begin{assumption} \label{ass:basic2}
 For all $i\in \IN$ we assume that there exist $\omega_i \in Y^*$ such that $e^{(i)}= A^*\omega_i$.
 Consequently, there holds
 $x_i=\langle e^{(i)},x\rangle=\langle \omega_i,Ax\rangle_{Y^* \times Y}\,$ for all $x=(x_1,x_2,...) \in \2$.
\end{assumption}

\begin{remark} {\rm
Assumption~\ref{ass:basic2} was already suggested in \cite{BreLor09,Grasm09,Lorenz08}. In a recent paper \cite{AHR13} it was shown that the set of range conditions $e^{(i)} \in {\rm Range}(A^*)$, $i \in \IN$, which is equivalent to $u_i \in {\rm Range}(\widetilde A^*)$ for all elements of the orthonormal basis $\{u_i\}_{i=1}^\infty$ in $\widetilde X$, applies for important classes of linear inverse problems. Precisely, Assumption~\ref{ass:basic2} can
be reinterpreted as a requirement on the `smoothness' of the basis elements $u_i$ with respect to the forward operator $\widetilde A$, and we refer to \cite{FlemHeg15} for consequence in the case of `non-smooth' basis elements.

Under Assumption~\ref{ass:basic2} we have for all $i \in I$
\begin{align*}
|x_i-x_i^\dag|=|\langle e^{(i)},x-x^\dag\rangle|
=|\langle \omega_i,A(x-x^\dag)\rangle_{Y^* \times Y}|\leq
\|\omega_i\|_{Y^*} \|A(x-x^\dag)\|,
\end{align*}
and hence
\begin{equation} \label{eq:e1}
\sum \limits _{i \in I} |x_i-\xdag_i| \le  K\, \|A(x-\xdag)\| \qquad \mbox{for all} \quad x \in \2
\end{equation}
with a constant $K=\sum \limits_{i \in I} \|\omega_i\|_{Y^*}$.
} \hfill\fbox{}
\end{remark}

\begin{remark} {\rm
If Assumption~\ref{ass:basic2} fails, then the term $\sum_{i\in I}|x_i^\dag-x_i|$ can be, as an alternative to (\ref{eq:e1}),  estimated from above by taking into the account that for any injective $A: \2 \to Y$, which maps also continuously from $\1$ to $Y$, the {\sl modulus of injectivity} (cf.,~e.g.,~\cite[\S3.2]{HMS08} and \cite[\S2.4]{MatHof08}) $j(A,\Sigma(\xdag))$ defined as
$$ j(A,\Sigma(\xdag)):=\inf \limits _{0\not=x \in \Sigma(\xdag)} \frac{\|Ax\|}{\|x\|_{\1}}>0$$
is a finite positive number. This is similar to ideas of the proofs of Theorems~14 and 15 in \cite{GrasHaltSch08}.  Precisely, we can estimate
$$  \sum \limits_{i \in I} |x_i^\dagger - x_i| \leq \frac{1}{j(A,\Sigma(\xdag))}\,\|A (P_{I} x-\xdag)\| \qquad \mbox{for all}\quad x \in \ell^1, $$
where $P_I$ denotes the projection to the subspace $\Sigma(\xdag)$. Thus we obtain, for all $x \in \1$,
\begin{align*}
 \sum \limits_{i \in I} |x_i^\dagger - x_i| & \leq \frac{1}{j(A,\Sigma(\xdag))}\,\left(\|A (x-\xdag)\| +\|A (I-P_{I})x\|\right) \\
 &
 \le \frac{1}{j(A,\Sigma(\xdag))}\,\left(\|A (x-\xdag)\| +\|A\|_{\mathcal L(\1,Y)}\|(I-P_{I})x\|_{\1}\right) \\
& \le c_1 \,\|A (x-\xdag)\| +c_2\, \sum \limits_{i \notin I}|x_i|
\end{align*}
with positive constants $c_1$ and $c_2$. As we will see, however, handling of the term  $\sum \limits_{i \notin I}|x_i|$  becomes a serious difficulty in the further estimation process.
Hence, rates results are still missing when Assumption~\ref{ass:basic2} is violated.

} \hfill\fbox{}
\end{remark}

\begin{assumption} \label{ass:basic3}
Let $\xdag \in \mathcal{D}(\mathcal{R})$ and $\partial \mathcal{R}(\xdag) \not=\emptyset$. Assume that there exists some element $\xi^\dagger \in \partial \mathcal{R}(\xdag)$
satisfying a source condition
\begin{equation} \label{eq:e2}
\xi^\dagger=A^*w,\quad \mbox{for some}\quad w \in Y^*.
\end{equation}
\end{assumption}

\begin{remark} \label{rem:ass3}{\rm
If we have, for some $\xi^\dagger \in \partial \mathcal{R}(\xdag)$, sparsity in the sense that $\xi^\dagger \in \ell^0$, then Assumption~\ref{ass:basic2} implies that (\ref{eq:e2}) and hence Assumption~\ref{ass:basic3} hold, because with $\xdag \in \ell^0$
$$\xi^\dagger= \sum \limits_{k \in \IN:\,\xi^\dagger_k \not=0} \xi^\dagger_k\,e^{(k)}=A^* \left( \sum \limits_{k \in \IN:\,\xi^\dagger_k \not=0} \xi^\dagger_k\,\omega_k \right). $$
This is in particular under Assumption~\ref{ass:basic} the case whenever the implication
\begin{equation} \label{eq:implic}
\xdag \in \ell^0 \quad \Longrightarrow \quad \xi^\dagger \in \ell^0
\end{equation}
holds true.
} \hfill\fbox{}
\end{remark}

We provide two examples when Assumption \ref{ass:basic3} is satisfied or violated.
\begin{xmpl} \label{ex:implic}
Consider the family of functionals
$$\mathcal{R}(x):= \frac{\eta}{p}\|x\|^p_{\ell^p}, \qquad \eta>0,$$
with exponents $1 < p \le 2$, where the subdifferential $\partial \mathcal{R}(\xdag)$ is always a singleton such that
$$\xi^\dagger_k= \eta\,|\xdag_k|^{p-1}\sgn(\xdag_k),\;k \in \IN. $$
For such exponents $p$ the functionals $\mathcal{R}$ are convex, lower semi-continuous and stabilizing in the sense of item (d) of Assumption~1.
Consequently, Assumption~3 is applicable, because $\xdag \in \ell^0$ implies $\xi^\dagger \in \ell^0$  and hence (\ref{eq:e2}) is satisfied. Note that therefore for the penalty functional
$$\Omega(x):=\|x\|_{\ell^0}+\frac{\eta}{p}\|x\|^p_{\ell^p}, \qquad 1<p \le2,\;\;\eta>0,$$
the main Theorem~\ref{thm:main} below applies and provides us with linear convergence rates of the Tikhonov regularization when all other assumptions are fulfilled.
\end{xmpl}

\begin{xmpl} \label{ex:noimplic}
Consider alternatively the family of functionals
$$\mathcal{R}(x):=\frac{1}{2}\|Bx\|_{\2}^2$$
 with a bounded linear operator $B: \2 \to \2$ such that
$Be^{(1)}:=\sum \limits_{i=1}^\infty\lambda_i\,e^{(i)}\;$ with $\;\sum \limits_{i=1}^\infty\lambda_i^2<\infty,\;\lambda_i>0\;(i=1,2,...)$, and  $\;Be^{(k)}:=e^{(k)}$ for all $k \ge 2$.
Then we have $\xi^\dagger=B^*B \xdag$, where always $\xi^\dagger \notin \ell^0$,
although the solution $0 \not=\xdag \in \ell^0$ is sparse.
\end{xmpl}

The basic Lemma~\ref{lem_2:1} below follows directly from Assumptions~\ref{ass:basic} -- \ref{ass:basic3} and yields a variational inequality which, acting as a variational source condition, allows for convergence rates if an $\ell^0$-term
complemented by a convex term forms the penalty functional of Tikhonov regularization.

\begin{lemma}\label{lem_2:1}
Let
\begin{equation}\label{eq:M}
 \M:=\{ x \in \ell^0:\,\|x\|_{\2} \leq \rho\}
\end{equation}
for a prescribed sufficiently large real value $\rho \ge 1$.
Then under Assumptions~\ref{ass:basic}, \ref{ass:basic2} and \ref{ass:basic3} there exist constants $0<\beta \le 1$ and $c_0  \ge 0$ depending on $A$, $\mathcal{R}$ and $\xdag \in \ell^0$ such that the variational inequality
\begin{equation}\label{eq_2:1}
\beta\,\|x-x^\dag\|_{\ell^1} \leq  \|x\|_{\ell^0} - \|\xdag\|_{\ell^0}+ \mathcal{R}(x)-\mathcal{R}(\xdag)+ c_0\,\|A(x-x^\dag)\| \;\; \mbox{for all} \;\;  x \in \M
\end{equation}
is valid.
\end{lemma}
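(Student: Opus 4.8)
The plan is to split the $\ell^1$-distance along the support $I$ of $\xdag$: since $\xdag_i=0$ for $i\notin I$, for every $x\in\M$ we have
\[
\|x-\xdag\|_{\1}=\sum_{i\in I}|x_i-\xdag_i|+\sum_{i\notin I}|x_i|,
\]
and I would estimate the two sums separately against the right-hand side of (\ref{eq_2:1}), pushing every term proportional to $\|A(x-\xdag)\|$ into the constant $c_0$. For the first sum this is immediate from Assumption~\ref{ass:basic2}: the estimate (\ref{eq:e1}) gives $\beta\sum_{i\in I}|x_i-\xdag_i|\le \beta K\,\|A(x-\xdag)\|$.

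For the second sum — the delicate part — I would exploit two features of $\M$. First, every index $i\notin I$ with $x_i\neq 0$ contributes exactly $1=\sgn(|x_i|)$ to $\|x\|_{\ell^0}$. Second, on $\M$ one has the uniform bound $|x_i|\le\|x\|_{\2}\le\rho$ from the embedding $\2\hookrightarrow\3$. Hence, fixing any $\beta$ with $0<\beta\le 1/\rho$ (which also forces $\beta\le 1$, as $\rho\ge1$), one gets $\beta|x_i|\le\beta\rho\le1=\sgn(|x_i|)$ for each such $i$, so that $\beta\sum_{i\notin I}|x_i|\le\#\{i\notin I:\,x_i\neq 0\}$. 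It then remains to bound this cardinality by $\|x\|_{\ell^0}-\|\xdag\|_{\ell^0}$ modulo $\|A(x-\xdag)\|$. Writing $\|x\|_{\ell^0}=\#\{i\in I:\,x_i\neq0\}+\#\{i\notin I:\,x_i\neq0\}$ and $\|\xdag\|_{\ell^0}=|I|$ yields
\[
\#\{i\notin I:\,x_i\neq0\}=\bigl(\|x\|_{\ell^0}-\|\xdag\|_{\ell^0}\bigr)+\#\{i\in I:\,x_i=0\},
\]
and since $|\xdag_i|\ge m:=\min_{i\in I}|\xdag_i|>0$ for $i\in I$, each $i\in I$ with $x_i=0$ obeys $|x_i-\xdag_i|=|\xdag_i|\ge m$; summing these and invoking (\ref{eq:e1}) once more gives $\#\{i\in I:\,x_i=0\}\le (K/m)\,\|A(x-\xdag)\|$.

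Finally I would bring in the convex penalty through the source condition: by convexity of $\mathcal{R}$ and $\xi^\dagger=A^*w\in\partial\mathcal{R}(\xdag)$,
\[
\mathcal{R}(x)-\mathcal{R}(\xdag)\ \ge\ \langle\xi^\dagger,x-\xdag\rangle=\langle w,A(x-\xdag)\rangle_{Y^*\times Y}\ \ge\ -\,\|w\|_{Y^*}\,\|A(x-\xdag)\|
\]
(with both sides read in $[-\infty,\infty]$ when $\mathcal{R}(x)=\infty$, in which case (\ref{eq_2:1}) is trivial). Adding the nonnegative quantity $\mathcal{R}(x)-\mathcal{R}(\xdag)+\|w\|_{Y^*}\|A(x-\xdag)\|$ to the accumulated estimates and collecting terms then produces (\ref{eq_2:1}) with, for instance, $\beta:=1/\rho$ and $c_0:=\beta K+K/m+\|w\|_{Y^*}$. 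The step I expect to be the real obstacle is the treatment of the ``new'' support outside $I$: one must use simultaneously that the $\ell^0$-term charges one unit per extra nonzero component, that boundedness of $\M$ in $\2$ (hence in $\3$) lets a sufficiently small $\beta$ keep each $\beta|x_i|$ below that unit, and that the loss of the $\ell^0$-term caused by components of $x$ vanishing on $I$ is — via injectivity through (\ref{eq:e1}) — controlled by $\|A(x-\xdag)\|$. The remaining combinatorics and the convex estimate are routine.
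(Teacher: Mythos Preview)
Your proposal is correct and follows essentially the same route as the paper's proof: split along $I$, control the off-support part via $|x_i|\le\rho$ against the unit $\ell^0$-charge, absorb the deficit $\#\{i\in I:\,x_i=0\}$ through the minimum $m=\underline\kappa$ and (\ref{eq:e1}), and handle $\mathcal{R}$ by the subgradient inequality together with the source condition. You even arrive at the identical constants $\beta=1/\rho$ and $c_0=K(1/\rho+1/\underline\kappa)+\|w\|_{Y^*}$.
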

\begin{proof}
Since $x^{\dag}\in \ell^0$ and $I$ is a finite subset of $\IN$, we derive for all $x \in \ell^0$
\begin{align*}
& \|x\|_{\ell^0}-\|x^\dag\|_{\ell^0} \nonumber \\
& = \sum_{i\in I}\left[{\rm sgn}(|x_i|)-{\rm sgn}(|x_i^\dag|)\right]
+\sum_{i\notin I}{\rm sgn}(|x_i|). \nonumber
\end{align*}
Now,
\begin{equation*}
\begin{aligned}
 \|x-x^\dag\|_{\ell^1}&= \sum_{i\in I}|x_i^\dag-x_i| + \sum_{i\notin I}|x_i|\\
 & \leq  \sum_{i\in I}|x_i^\dag-x_i|+ \max_{i\notin I} |x_i| \sum_{i\notin I}{\rm sgn}(|x_i|), \\
\end{aligned}
\end{equation*}
and because of  $  \max_{i\notin I} |x_i| \leq \|x\|_{\3} \leq  \|x\|_{\ell^2}\leq\rho $
we have
\begin{equation*}
\begin{aligned}
\|x-x^\dag\|_{\ell^1} &\leq
  \sum_{i\in I}|x_i^\dag-x_i| + \rho \sum_{i\notin I}{\rm sgn}(|x_i|)\\
&\leq
  \sum_{i\in I}|x_i^\dag-x_i|+ \rho \left[\|x\|_{\ell^0}-\|x^\dag\|_{\ell^0}
 -\sum_{i\in I}\left[{\rm sgn}(|x_i|)-{\rm sgn}(|x_i^\dag|)\right]
\right].
\end{aligned}
\end{equation*}
To estimate the right-hand side of the last inequality from above we  consider $J(x):= \{i\in I:\, |x_i|=0\}$
and set  $\underline \kappa:= \min_{i\in I}{|x_i^\dag|}>0$. Then we can further estimate as
\begin{equation}\label{eq_2:3}
\sum_{i\in I}\left[{\rm sgn}(|x_i^\dag|)-{\rm sgn}(|x_i|)\right]=\sum_{i\in J(x)}\left[{\rm sgn}(|x_i^\dag|)\right]
\leq \frac{1}{\underline \kappa}\sum_{i\in J(x)}|x_i^\dag-x_i|.
\end{equation}
If $J(x)$ is an empty set, the inequality (\ref{eq_2:3}) still holds true by interpreting the sum on the right-hand side as zero. We thus conclude that
\begin{align*}
& \|x-x^\dag\|_{\ell^1}  \leq \sum_{i\in I}|x_i^\dag-x_i| + \rho \left[\|x\|_{\ell^0}-\|x^\dag\|_{\ell^0} + \frac{1}{\underline \kappa}\sum_{i\in I}|x_i^\dag-x_i|
\right].
\end{align*}
As a consequence of (\ref{eq:e1}) this yields for all $x \in \M$ the inequality
\begin{equation} \label{eq:ab1}
\frac{1}{\rho}\,\|x-x^\dag\|_{\ell^1} \leq \|x\|_{\ell^0}-\|x^\dag\|_{\ell^0} + K\,\left(\frac{1}{\rho}+\frac{1}{\underline \kappa} \right) \|A(x-x^\dag)\|
\end{equation}
with the constant $K=\sum \limits_{i \in I} \|\omega_i\|_{Y^*}$.

\smallskip

For the convex functional $\mathcal{R}$ and $\xi^\dagger \in \partial \mathcal{R}(\xdag)$ we have
$$0 \le \mathcal{R}(x)-\mathcal{R}(\xdag)+\langle \xi^\dagger,\xdag-x \rangle $$
and hence
\begin{equation}\label{eq:start}
0 \le \mathcal{R}(x)-\mathcal{R}(\xdag)+|\langle \xi^\dagger,x-\xdag \rangle| \qquad \mbox{for all} \quad  x \in X=\2\,,
\end{equation}
therefore with (\ref{eq:e2})
$$0 \le \mathcal{R}(x)-\mathcal{R}(\xdag)+\|w\|_{Y^*}\,\|A(x-\xdag)\| \qquad \mbox{for all} \quad  x \in \2\,.$$
In combination with inequality (\ref{eq:ab1}) we thus obtain the variational source condition (\ref{eq_2:1}) with the constants $0<\beta=\frac{1}{\rho} \le 1$ and
$$c_0= K\,\left(\frac{1}{\rho}+\frac{1}{\underline \kappa} \right)+\|w\|_{Y^*}.$$
This completes the proof.
\end{proof}

\begin{remark} \label{rem:necessary} {\rm
From \cite[Lemma~8.21]{Scherzetal09} we know that for the existence of a constant $C>0$ satisfying  $|\langle \xi^\dagger,x-\xdag \rangle| \le C\,\|A(x-\xdag)\|$ for all $x \in X$ the
condition (\ref{eq:e2}) is even necessary.
}

\hfill\fbox{}
\end{remark}

\section{Convergence rates results}\label{se3}

As next step we are going to apply Lemma~\ref{lem_2:1} to prove convergence rates for the variant (\ref{eq:Tiktwo}) of Tikhonov regularization with the mixed penalty functional $\Omega$ from (\ref{eq:Omegatwo}) and regularized solutions $\xad$ as minimizers of $\Phi_\alpha^\delta$. This, however, requires that for all $\alpha>0$ such regularized solutions exist. This will be shown by the following lemma.

\begin{lemma}\label{lem_2:2}
For all $\alpha>0$ and $\yd \in Y$ there are minimizers $\xad$ to the Tikhonov functional $\Phi_\alpha^\delta$ from (\ref{eq:Tiktwo}).
\end{lemma}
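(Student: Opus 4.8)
The plan is to use the direct method of the calculus of variations, adapted to the fact that $\Omega$ is not convex but is still lower semi-continuous in an appropriate sense on the relevant sublevel sets. First I would fix $\alpha > 0$ and $\yd \in Y$ and observe that the functional $\Phi_\alpha^\delta$ is proper: since $\xdag \in \ell^0 \cap \mathcal{D}(\mathcal{R})$ by Assumptions~\ref{ass:basic}(b) and \ref{ass:basic3}, we have $\Phi_\alpha^\delta(\xdag) < \infty$, so the infimum $m := \inf_{x \in \ell^2} \Phi_\alpha^\delta(x) \in [0,\infty)$ is finite. Take a minimizing sequence $\{x_n\} \subset \ell^2$ with $\Phi_\alpha^\delta(x_n) \to m$. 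Then for $n$ large we have $\Phi_\alpha^\delta(x_n) \le m+1$, which gives uniform bounds $\mathcal{R}(x_n) \le (m+1)/\alpha =: c$, $\|x_n\|_{\ell^0} \le (m+1)/\alpha =: N$ (hence $\|x_n\|_{\ell^0} \le \lfloor N \rfloor$ since $\ell^0$-values are integers), and $\|Ax_n - \yd\|^q \le q(m+1)$.

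The key structural step is extracting a limit that is still sparse. From $\mathcal{R}(x_n) \le c$ and Assumption~\ref{ass:basic}(d) (weak coercivity, see Remark~\ref{rem:ass2}), the sequence $\{x_n\}$ is bounded in $\ell^2$, so after passing to a subsequence $x_n \rightharpoonup \hx$ weakly in $\ell^2$. Weak convergence in $\ell^2$ implies componentwise convergence $(x_n)_i \to \hx_i$ for each $i$. Since each $x_n$ has at most $\lfloor N \rfloor$ nonzero components, a further subsequence can be chosen for which the \emph{support} is a fixed index set $S$ with $|S| \le \lfloor N \rfloor$: indeed there are only finitely many subsets of any finite window and the supports all lie in — wait, the supports need not lie in a fixed finite window a priori, so here I would argue more carefully: componentwise convergence plus $|\operatorname{supp}(x_n)| \le \lfloor N\rfloor$ forces $\hx$ to have support of size at most $\lfloor N\rfloor$, because if $\hx$ had $\lfloor N\rfloor + 1$ nonzero entries, say indexed by $i_1,\dots,i_{\lfloor N\rfloor+1}$, then for $n$ large all these entries of $x_n$ would be nonzero, contradicting the $\ell^0$-bound. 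Hence $\hx \in \ell^0$, and moreover $\|\hx\|_{\ell^0} \le \liminf_n \|x_n\|_{\ell^0}$ by the same componentwise argument (each nonzero component of $\hx$ is eventually a nonzero component of $x_n$). This is the step I expect to be the main obstacle, since it is exactly where the non-convexity of the $\ell^0$-term is handled and where one must be careful that no "mass escapes to infinity" in the index.

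With $\hx \in \ell^0$ secured, I would finish by lower semi-continuity of the three terms along the (sub)subsequence: $\|\hx\|_{\ell^0} \le \liminf_n \|x_n\|_{\ell^0}$ as just shown; $\mathcal{R}(\hx) \le \liminf_n \mathcal{R}(x_n)$ since $\mathcal{R}$ is convex and lower semi-continuous, hence weakly sequentially lower semi-continuous (Assumption~\ref{ass:basic}(c)); and $\|A\hx - \yd\| \le \liminf_n \|Ax_n - \yd\|$ because $A$ is bounded linear, so $Ax_n \rightharpoonup A\hx$ weakly in $Y$, and the norm is weakly sequentially lower semi-continuous. Combining these, $\Phi_\alpha^\delta(\hx) \le \liminf_n \Phi_\alpha^\delta(x_n) = m$, so $\hx$ attains the infimum and $\xad := \hx$ is a minimizer. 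I would remark that this argument reproduces, for the general convex $\mathcal{R}$ satisfying Assumption~\ref{ass:basic}(c)--(d), the existence result established in \cite{WLMC13} for the special case $\mathcal{R}(x) = \tfrac{\eta}{2}\|x\|_{\ell^2}^2$.
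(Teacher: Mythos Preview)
Your proposal is correct and follows essentially the same approach as the paper: the paper's proof simply cites \cite[Lemma~2.1]{WLMC13} for the weak lower semi-continuity of $\|\cdot\|_{\ell^0}$ along $\ell^0$-bounded, weakly $\ell^2$-convergent sequences, and then refers to the direct-method existence proof of \cite[Theorem~2.3]{WLMC13}, noting that it extends from $\mathcal{R}=\tfrac{\eta}{2}\|\cdot\|_{\ell^2}^2$ to the general stabilizing convex $\mathcal{R}$. You have written out precisely these two ingredients in full, including the componentwise argument for $\|\hat x\|_{\ell^0}\le \liminf_n \|x_n\|_{\ell^0}$; the only cosmetic point is that properness does not require Assumption~\ref{ass:basic3} (any point in $\ell^0\cap\mathcal{D}(\mathcal{R})$, e.g.\ $x=0$ for the examples considered, suffices).
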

\begin{proof}
In \cite[Lemma~2.1]{WLMC13} it was shown that the functional $\|\cdot\|_{\ell^0}$ is weakly lower semi-continuous with respect to $\ell^0$-norm bounded sequences which are weakly convergent in $\2$.
Based on that fact the proof of \cite[Theorem~2.3]{WLMC13} can be extended from the specific penalty $\Omega$ from (\ref{eq:Omegaeta}) used there to our general convex stabilizing penalty $\Omega$ from (\ref{eq:Omegatwo}). This proves the lemma.
\end{proof}

\smallskip

Now we are ready to formulate the first theorem based on the variational source condition (\ref{eq_2:1}).

\begin{thm} \label{thm:main}
Under  Assumptions~\ref{ass:basic} -- \ref{ass:basic3} the linear convergence rate
\begin{equation} \label{eq:linearrate1}
\|x_{\alpha_*}^\delta-\xdag\|_{\1}=\mathcal{O}(\delta) \quad \mbox{as} \quad \delta \to 0,
\end{equation}
holds true for the Tikhonov regularization (\ref{eq:Tiktwo}) with $1 < q <\infty$ and the penalty functional $\Omega$ from (\ref{eq:Omegatwo}) whenever
the regularization parameter $\alpha_*>0$
is chosen either a priori as $\alpha_*=\alpha_*(\delta) \sim \delta^{q-1}$ or a posteriori as $\alpha_*=\alpha_*(\delta,\yd)$ by applying the sequential discrepancy principle (see Remark~\ref{rem:SDP} below).
\end{thm}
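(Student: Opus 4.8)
The plan is to play the variational inequality \eqref{eq_2:1} off against the minimality relation $\Phi_\alpha^\delta(\xad)\le\Phi_\alpha^\delta(\xdag)$. Minimizers exist for every $\alpha>0$ by Lemma~\ref{lem_2:2}, and since $\Phi_\alpha^\delta(\xad)\le\Phi_\alpha^\delta(\xdag)<\infty$ forces $\xad\in\ell^0\cap\mathcal{D}(\mathcal{R})$, the only obstacle to invoking Lemma~\ref{lem_2:1} is the restriction $\xad\in\M$. I would therefore fix once and for all $\rho:=\max\{1,\,r(\Omega(\xdag)+1)\}$, with $r(\cdot)$ the radius function of Remark~\ref{rem:ass2}; this simultaneously pins down $\beta=1/\rho$ and $c_0$ in \eqref{eq_2:1}. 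Comparing Tikhonov values at $\xad$ and $\xdag$ and using $\|A\xdag-\yd\|=\|y-\yd\|\le\delta$ gives, with $\sigma:=\|A\xad-\yd\|$,
\begin{equation}\label{eq:plan-min}
\Omega(\xad)-\Omega(\xdag)\;\le\;\tfrac{1}{q\alpha}\bigl(\delta^q-\sigma^q\bigr)\;\le\;\tfrac{\delta^q}{q\alpha}\,,
\end{equation}
so whenever $\alpha\gtrsim\delta^{q-1}$ the right-hand side is $\le1$ for $\delta$ small, hence $\mathcal{R}(\xad)\le\Omega(\xad)\le\Omega(\xdag)+1$ and $\|\xad\|_{\2}\le r(\Omega(\xdag)+1)\le\rho$, i.e.~$\xad\in\M$.

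For the a priori rule $\alpha_*=\alpha_*(\delta)\sim\delta^{q-1}$ we then have $x_{\alpha_*}^\delta\in\M$, so \eqref{eq_2:1} applies at $x=x_{\alpha_*}^\delta$. Writing $\sigma:=\|Ax_{\alpha_*}^\delta-\yd\|$, inserting \eqref{eq:plan-min} and the triangle inequality $\|A(x_{\alpha_*}^\delta-\xdag)\|\le\sigma+\delta$ into \eqref{eq_2:1}, and splitting $c_0\sigma$ by Young's inequality with conjugate exponents $q/(q-1)$ and $q$, the free parameter being chosen so that the resulting $\sigma^q$-term equals $\sigma^q/(q\alpha_*)$ and cancels, one is left with
\begin{equation}\label{eq:plan-apriori}
\beta\,\|x_{\alpha_*}^\delta-\xdag\|_{\1}\;\le\;\frac{\delta^q}{q\alpha_*}+\frac{q-1}{q}\,c_0^{\,q/(q-1)}\,\alpha_*^{\,1/(q-1)}+c_0\,\delta\,.
\end{equation}
Since $\alpha_*\sim\delta^{q-1}$ gives $\delta^q/\alpha_*\sim\delta$ and $\alpha_*^{1/(q-1)}\sim\delta$, the right-hand side of \eqref{eq:plan-apriori} is $\mathcal{O}(\delta)$, which is \eqref{eq:linearrate1}.

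For the sequential discrepancy principle $\alpha_*=\alpha_*(\delta,\yd)$ is taken from a geometric grid $\{\alpha_0\theta^j:j\in\IN\}$, $\theta\in(0,1)$, as the largest grid value with $\|Ax_{\alpha_*}^\delta-\yd\|\le\tau\delta$ for a fixed $\tau>1$, so that at the neighbouring grid point the \emph{lower} estimate $\|Ax_{\alpha_*/\theta}^\delta-\yd\|>\tau\delta$ holds (well-posedness of this choice for the non-convex $\Omega$ being the content of Remark~\ref{rem:SDP}). The \emph{upper} estimate $\|Ax_{\alpha_*}^\delta-\yd\|\le\tau\delta$ gives $\|A(x_{\alpha_*}^\delta-\xdag)\|\le(\tau+1)\delta$, and together with \eqref{eq:plan-min} and \eqref{eq_2:1} this yields $\beta\|x_{\alpha_*}^\delta-\xdag\|_{\1}\le\delta^q/(q\alpha_*)+c_0(\tau+1)\delta$ once $x_{\alpha_*}^\delta\in\M$; it thus remains to establish $\alpha_*\gtrsim\delta^{q-1}$ (which by the first paragraph also secures $x_{\alpha_*}^\delta\in\M$). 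For that I would work at $\alpha':=\alpha_*/\theta$. Minimality combined with $\|Ax_{\alpha'}^\delta-\yd\|>\tau\delta$ gives first $\Omega(x_{\alpha'}^\delta)<\Omega(\xdag)$ — hence $x_{\alpha'}^\delta\in\M$ with no smallness of $\delta$ needed — and, more quantitatively, $\Omega(\xdag)-\Omega(x_{\alpha'}^\delta)>\frac{\tau^q-1}{q\alpha'}\delta^q$; on the other hand \eqref{eq_2:1}, after discarding its nonnegative left-hand side, gives $\Omega(\xdag)-\Omega(x_{\alpha'}^\delta)\le c_0\bigl(\|Ax_{\alpha'}^\delta-\yd\|+\delta\bigr)$, and minimality once more gives $\|Ax_{\alpha'}^\delta-\yd\|^q\le\delta^q+q\alpha'\bigl(\Omega(\xdag)-\Omega(x_{\alpha'}^\delta)\bigr)$. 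Using $\|Ax_{\alpha'}^\delta-\yd\|>\tau\delta>\delta$ to absorb lower-order terms, these three relations combine — after substituting $t:=\alpha'\delta^{1-q}$ — into an inequality of the form $(c/t)^{q-1}<c'+c''t$, which, since $q>1$, is impossible for $t\to0^{+}$; hence $t$ is bounded below, i.e.~$\alpha'\ge c\,\delta^{q-1}$ and $\alpha_*=\theta\alpha'\ge c\theta\,\delta^{q-1}$. Then $\delta^q/(q\alpha_*)=\mathcal{O}(\delta)$ and \eqref{eq:linearrate1} follows in the a posteriori case too.

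I expect the genuine work to be concentrated in the a posteriori part, specifically the lower bound $\alpha_*\gtrsim\delta^{q-1}$ for the sequential discrepancy principle: this ``backward'' control of the discrepancy curve is the step where \eqref{eq_2:1} (and through it the source condition of Assumption~\ref{ass:basic3}) is genuinely used, and where the non-convexity of $\Omega$ forces reliance on the separate well-posedness discussion of the SDP referenced in Remark~\ref{rem:SDP}. The remaining ingredients — membership of the regularized solutions in $\M$ via the stabilizing property of $\mathcal{R}$, the a priori balancing of $\delta^q/\alpha$ against $\alpha^{1/(q-1)}$, and the Young splitting that removes the residual term in \eqref{eq:plan-apriori} — are routine once Lemma~\ref{lem_2:1} is available.
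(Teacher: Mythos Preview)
Your argument is correct but takes a different, more self-contained route than the paper. The paper's own proof is very brief: after verifying $x_{\alpha_*}^\delta\in\M$ exactly as you do (minimality plus the stabilizing property of $\mathcal{R}$), it simply invokes Theorems~1 and~2 of \cite{HofMat12} for the rate, and \cite[Proposition~8]{AnzHofMat13} for the fact that $\delta^q/\alpha_*\to0$ under the sequential discrepancy principle, noting that the proofs there do not use convexity of the penalty. You instead carry out the estimates explicitly: the a~priori bound \eqref{eq:plan-apriori} via Young's inequality is precisely the computation the paper performs later in Theorem~\ref{th_2:1} for the special case $\mathcal{R}=\frac{\eta}{2}\|\cdot\|_{\2}^2$, and your SDP argument---working at the neighbouring grid point $\alpha'=\alpha_*/\theta$, using $\|Ax_{\alpha'}^\delta-\yd\|>\tau\delta$ to force $\Omega(x_{\alpha'}^\delta)<\Omega(\xdag)$ (hence $x_{\alpha'}^\delta\in\M$ without any smallness of $\delta$), and then combining minimality with \eqref{eq_2:1} to obtain $\alpha_*\gtrsim\delta^{q-1}$---parallels the proof of Theorem~\ref{th_2:sdp}. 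What your approach buys is independence from the cited references and an explicit demonstration that the sequential discrepancy principle yields $\alpha_*\gtrsim\delta^{q-1}$ (not merely $\delta^q/\alpha_*\to0$), which is exactly what the rate requires; what the paper's approach buys is brevity and a clean separation between the abstract rate machinery and the specific variational inequality \eqref{eq_2:1}. One minor inaccuracy: Remark~\ref{rem:SDP} only \emph{defines} the sequential discrepancy principle, it does not establish its well-posedness for non-convex $\Omega$; that is implicit in the cited references.
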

\begin{proof}
If we can prove that $x_{\alpha_*}^\delta \in \M$, the convergence rate result  (\ref{eq:linearrate1}) of the theorem is an immediate consequence of the Theorems~1 and 2 from \cite{HofMat12}
based on the variational source condition (\ref{eq_2:1}).
Namely, an inspection of the proofs of both theorems shows that no convexity assumption is imposed on the occurring penalties, and hence the non-convex penalty $\Omega$ from (\ref{eq:Omegatwo}) is applicable. Now it remains to show  that $x_{\alpha_*}^\delta \in \M$ is true for a sufficiently large value $\rho$.

 Evidently, we have $\xad \in \ell^0$ for all $\alpha>0$, because $\|\cdot\|_{\ell^0}$ is part of the penalty functional $\Omega$.
Moreover, from $\Phi_{\alpha_*}^\delta(x_{\alpha_*}^\delta) \le \Phi_{\alpha_*}^\delta(\xdag)$ it follows that $\mathcal{R}(x_{\alpha_*}^\delta) \le \frac{\delta^q}{q\alpha_*} + \|\xdag\|_{\ell^0}+\mathcal{R}(\xdag)$. For both choices of  the
regularization parameter $\alpha_*$ under consideration we have that $\frac{\delta^q}{\alpha_*}$ tends to zero as $\delta \to 0$ (cf.~\cite[Proposition~8]{AnzHofMat13}) and is consequently bounded by a constant $\hat K>0$ whenever $0<\delta \le \overline \delta$.
For those sufficiently small $\delta>0$ we have $\mathcal{R}(x_{\alpha_*}^\delta) \le \overline K$ for  $\overline K:=\frac{\hat K}{q}+ \|\xdag\|_{\ell^0}+\mathcal{R}(\xdag)<\infty$. The stabilizing property of the functional $\mathcal{R}$ from item (d) of Assumption~\ref{ass:basic} ensures that there is a radius $r(\overline K)$ such that $\|x_{\alpha_*}^\delta\|_{\2} \le r(\overline K)$. Prescribing $\rho$ such that the inequality $\rho \ge r(\overline K)$ is valid, we finally find  $x_{\alpha_*}^\delta \in \M$ for sufficiently small $\delta>0$. This completes the proof.
\end{proof}

\smallskip

\begin{remark} \label{rem:SDP} {\rm
We say that $\alpha_*=\alpha(\delta,\yd)$ is chosen according to the {\sl sequential discrepancy principle}
if the regularization parameter is taken for sufficiently large $\alpha_0>0$ and prescribed constants $0<\nu<1,\; \tau >1,$ from a geometric sequence
\begin{equation*}
  \Delta_{\nu}:= \set{\alpha_{j}:\quad \alpha_{j}:=
    \nu^{j}\alpha_{0},\quad j=1,2,\dots},
\end{equation*}
such that $\alpha_*=\alpha_{j_*} \in \Delta_\nu$ satisfies
$$\|Ax_{\alpha_{j_*}}^\delta-\yd\| \le \tau\,\delta <  \|Ax_{\alpha_{(j_*-1)}}^\delta-\yd\| .$$
} \hfill\fbox{}\end{remark}

\medskip

Now let us consider the situation that the source condition (\ref{eq:e2}) and hence Assumption~\ref{ass:basic3} are violated. Then the estimate (\ref{eq:start}) is the starting point for applying the {\sl method of approximate source conditions}
(cf.~\cite{Hof06} and \cite{HeinHof09}) for obtaining convergence rates which are lower than (\ref{eq:linearrate}). In this context, we call $\varphi:(0,\infty) \to (0,\infty)$ an index function if it is continuous and strictly increasing with $\lim_{t \to +0} \varphi(t)=0$. A basic tool is the positive, concave and strictly decreasing {\rm distance function}
\begin{equation} \label{eq:distfct}
d_{\xi^\dagger}(R):= \min\{\|\xi^\dagger-A^*w\|_{\2}:\,w \in X,\,\|w\|_{Y^*} \le R\},\qquad R>0,
\end{equation}
satisfying $\lim_{R \to \infty} d_{\xi^\dagger}(R)=0$. When the source condition (\ref{eq:e2}) fails, then we have for all $R>0$ elements $w_R \in Y^*$ with $\|w_R\|_{Y^*}=R$ and $\zeta_R \in \2$ with $\|\zeta_R\|_{\2}=d_{\xi^\dagger}(R)$ such that $\xi^\dagger=A^*w_R+\zeta_R$.

\begin{thm}\label{pro:approx}
Under  Assumptions~\ref{ass:basic} and \ref{ass:basic2} and in the case that Assumption~\ref{ass:basic3} is violated, i.e.~the source condition (\ref{eq:e2}) fails, define the strictly concave index function
\begin{equation*}
\varphi(t):=d_{\xi^\dagger}(\Psi^{-1}(t)),
\end{equation*}
including both the distance function $d_{\xi^\dagger}$ from (\ref{eq:distfct}) and the auxiliary function $\Psi(R):=\frac{d_{\xi^\dagger}(R)}{R}$, then the sublinear convergence rate
\begin{equation*} 
\|x_{\alpha_*}^\delta-\xdag\|_{\1}=\mathcal{O}(\varphi(\delta)) \quad \mbox{as} \quad \delta \to 0,
\end{equation*}
holds true for the Tikhonov regularization (\ref{eq:Tiktwo}) with $1 < q <\infty$ and the penalty functional $\Omega$ from (\ref{eq:Omegatwo}) whenever
the regularization parameter $\alpha_*>0$
is chosen either a priori as $\alpha_*=\alpha_*(\delta)= \frac{\delta^p}{\varphi(\delta)}$ or a posteriori as $\alpha_*=\alpha_*(\delta,\yd)$ by applying the sequential discrepancy principle.
\end{thm}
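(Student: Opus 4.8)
The plan is to mimic the structure of the proof of Theorem~\ref{thm:main}, replacing the linear variational source condition (\ref{eq_2:1}) by a \emph{weakened} one involving the concave index function $\varphi$, and then invoking the general convergence-rate machinery of \cite{HofMat12} (or equivalently \cite{HeinHof09}) for weak variational source conditions. The key intermediate step is to derive, in place of (\ref{eq_2:1}), an inequality of the form
\begin{equation*}
\beta\,\|x-\xdag\|_{\ell^1} \le \|x\|_{\ell^0}-\|\xdag\|_{\ell^0}+\mathcal{R}(x)-\mathcal{R}(\xdag)+c_1\,\|A(x-\xdag)\|+c_2\,\varphi\bigl(\|A(x-\xdag)\|\bigr)
\end{equation*}
for all $x\in\M$, with $\beta=1/\rho$ as before.

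First I would re-run the purely combinatorial part of the proof of Lemma~\ref{lem_2:1}, which uses only Assumptions~\ref{ass:basic} and \ref{ass:basic2}; this is untouched by the failure of the source condition and yields (\ref{eq:ab1}) verbatim. The new ingredient is the estimation of the term $|\langle\xi^\dagger,x-\xdag\rangle|$ in (\ref{eq:start}) \emph{without} (\ref{eq:e2}). Here I would use the splitting $\xi^\dagger=A^*w_R+\zeta_R$ with $\|w_R\|_{Y^*}=R$ and $\|\zeta_R\|_{\ell^2}=d_{\xi^\dagger}(R)$, obtaining
\begin{equation*}
|\langle\xi^\dagger,x-\xdag\rangle|\le R\,\|A(x-\xdag)\|+d_{\xi^\dagger}(R)\,\|x-\xdag\|_{\ell^2}.
\end{equation*}
Since $x,\xdag\in\M$ gives $\|x-\xdag\|_{\ell^2}\le 2\rho$, and since $R>0$ is free, I would optimize over $R$: the standard choice (as in \cite{HeinHof09,Hof06}) balances the two terms by taking $R$ so that $d_{\xi^\dagger}(R)/R = \|A(x-\xdag)\|/(2\rho)$, i.e.\ $R=\Psi^{-1}(\|A(x-\xdag)\|/(2\rho))$, which produces a bound of order $\varphi(\|A(x-\xdag)\|)$ up to constants. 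Feeding this, together with (\ref{eq:ab1}), back through (\ref{eq:start}) yields the weakened variational inequality displayed above. One should check that $\varphi$ as defined is indeed a strictly concave index function — this follows from the concavity and monotonicity properties of $d_{\xi^\dagger}$ recorded just before the statement, together with the fact that $\Psi$ is strictly decreasing so $\Psi^{-1}$ is well defined; I would state this verification but not belabor it.

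With the weakened source condition in hand, the remainder parallels Theorem~\ref{thm:main}: one applies the general results of \cite{HofMat12} for variational source conditions of the form $\beta\|x-\xdag\|\le\Omega(x)-\Omega(\xdag)+\varphi(\|A(x-\xdag)\|)$ (noting again that no convexity of the penalty is needed there), which deliver the rate $\|x_{\alpha_*}^\delta-\xdag\|_{\ell^1}=\mathcal{O}(\varphi(\delta))$ under the a~priori choice $\alpha_*\sim\delta^q/\varphi(\delta)$ or the sequential discrepancy principle. The argument that $x_{\alpha_*}^\delta\in\M$ for small $\delta$ is identical to the one in Theorem~\ref{thm:main} — it uses only $\Phi_{\alpha_*}^\delta(x_{\alpha_*}^\delta)\le\Phi_{\alpha_*}^\delta(\xdag)$, the boundedness of $\delta^q/\alpha_*$, and the stabilizing property of $\mathcal{R}$ — so I would simply refer back to it. I would also point out the harmless typo that the a~priori rule should read $\alpha_*=\delta^q/\varphi(\delta)$ rather than $\delta^p/\varphi(\delta)$, $p$ not being an active exponent here.

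The main obstacle is the interplay between the two error-dominating terms: after the $R$-optimization one obtains a bound of the \emph{mixed} type $c_1\|A(x-\xdag)\|+c_2\varphi(\|A(x-\xdag)\|)$, and one must ensure that, near $\delta=0$, the $\varphi$-term is the dominant one (equivalently $\|A(x-\xdag)\|=o(\varphi(\|A(x-\xdag)\|))$ fails, i.e.\ the linear term is absorbed). Since $\varphi$ is an index function with $\varphi(t)/t\to\infty$ as $t\to 0$ precisely when $d_{\xi^\dagger}$ decays slower than any linear rate — which is automatic when (\ref{eq:e2}) fails — the linear term $c_1\|A(x-\xdag)\|$ is of higher order than $c_2\varphi(\|A(x-\xdag)\|)$ and can be folded into the latter for small arguments; making this step precise (and handling the constants uniformly on $\M$) is the one genuinely delicate point, and it is exactly the place where the hypothesis ``Assumption~\ref{ass:basic3} is violated'' is used.
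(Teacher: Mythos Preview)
Your proposal is correct and follows essentially the same route as the paper: recycle inequality~(\ref{eq:ab1}) from Lemma~\ref{lem_2:1}, replace the missing source condition by the splitting $\xi^\dagger=A^*w_R+\zeta_R$ and an $R$-optimization yielding the $\varphi$-bound on $|\langle\xi^\dagger,x-\xdag\rangle|$, then invoke \cite{HofMat12} and the $\M$-membership argument from Theorem~\ref{thm:main}. The paper's only cosmetic difference is the specific balancing choice $R=\Psi^{-1}(\|A(x-\xdag)\|)$ (rather than your $\Psi^{-1}(\|A(x-\xdag)\|/(2\rho))$), and it records the absorption of the linear term simply as $K\delta+C\varphi(\delta)=\mathcal{O}(\varphi(\delta))$, exactly the point you flag in your last paragraph.
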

\begin{proof}
The proof is analogous to the proof of the above Theorem \ref{thm:main} and based on Theorems~1 and 2 from \cite{HofMat12}. The new aspect is to derive an inequality of the form
\begin{equation*} 
0 \le \mathcal{R}(x)-\mathcal{R}(\xdag)+ C\,\varphi(\|A(x-\xdag)\|), \qquad 0<C<\infty\,,
\end{equation*}
for all $x$ from a closed ball $B_{\overline r}(\xdag)$ of $X=\2$ with sufficiently large radius $\overline r$.
To show such an inequality we use (\ref{eq:start}) and estimate  $|\langle \xi^\dagger,x-\xdag \rangle|$ for the $x \in X$ under consideration from above as
\begin{align*}
|\langle \xi^\dagger,x-\xdag \rangle| &  \le |\langle w_R, A(x-\xdag)\rangle_{Y^* \times Y}+\langle \zeta_R,x-\xdag \rangle| \\
& \le \|w_R\|_{Y^*}\|A(x-\xdag)\|+d_{\xi^\dagger}(R)\overline r \le R\,\|A(x-\xdag)\|+d_{\xi^\dagger}(R)\overline r.
\end{align*}
After some calculation this implies the estimate
$$ |\langle \xi^\dagger,x-\xdag \rangle| \le (\overline r+1)\,d_{\xi^\dagger}(\Psi^{-1}(\|A(x-\xdag)\|)) $$
and completes the proof taking into the account that $$K\delta+C\varphi(\delta)=\mathcal{O}(\varphi(\delta)) \quad \mbox{as} \quad \delta \to 0\,.$$
\end{proof}

\section{The special case $\mathcal{R}(x)=\frac{\eta}{2}\|x\|^2_{\2}$} \label{se4}

In this section, we consider the special case
\[\mathcal{R}(x)=\frac{\eta}{2}\|x\|^2_{\2}, \quad \eta>0,\]
which yields the specific penalty $\Omega$ in (\ref{eq:Omegaeta}). Our focus is on the impact of the coefficient $\eta$ on the error estimates. Evidently, Assumption~\ref{ass:basic3} is always satisfied with the source element $\omega=\sum \limits_{k \in \IN:\,\xi^\dagger_k \not=0}
 \xi^\dagger_k\,\omega_k$, which implies that $\|\omega\|_{Y^*} \le K\eta\,\|\xdag\|_{\ell^\infty}$ with $K$ defined in the context of formula (\ref{eq:e1}).
In Theorem~\ref{thm:main} we have derived linear error estimates for Tikhonov regularization with the combined penalty functional $\Omega(x)$, but we have ignored factors weighting the $\ell^0$-term relative to the convex stabilizing part  in the penalty $\Omega$ as it is the factor $\eta$ in (\ref{eq:Omegaeta}).

In the sequel we exclude the singular case $\xdag=0$ and estimate the error $\|\xbe-\xdag\|_{\1}$, denoting $\xbe$ be the minimizer, under appropriate parameter choice rules. For any fixed $\eta>0$, from $\Phi_\alpha(\xbe) \leq \Phi_\alpha(\xdag)$ the following inequality hold true
$$\|\xbe\|^2_{\2} \leq \frac{2\delta^q}{q\alpha \eta}+\frac{2}{\eta}\|\xdag\|_{\ell^0}+\|\xdag\|_{\2}^2.$$
Consequently, we can find $\xbe \in \M$ for sufficiently small $\delta>0$ and
\begin{equation} \label{eq:relation}
\rho> \sqrt{\frac{2}{\eta}\|\xdag\|_{\ell^0}+\|\xdag\|_{\2}^2}.
\end{equation}
More precisely,
a priori parameter choices $\alpha_*=\alpha_*(\delta)$ and a posteriori parameter choices  $\alpha_*=\alpha_*(\delta,y^\delta)$ with some $\overline \delta>0$ yield $ \|x^\delta_{\alpha_*,\eta}\|_{\2} \leq \rho$ and hence $ x^\delta_{\alpha_*,\eta} \in \M$ for $0<\delta \leq \overline \delta$ provided that the limit condition $\delta^q/\alpha_* \to 0$ as $\delta \to 0$ holds true. Thus, the choice of a sufficiently large $\rho>0$ allows us to satisfy the premise of the following theorem for $0<\delta \leq \overline \delta$.

\begin{thm}\label{th_2:1}
For the specific penalty $\Omega$ from (\ref{eq:Omegaeta}) let items (a) and (b) of Assumption~\ref{ass:basic} and Assumption~\ref{ass:basic2} be satisfied.
Fix a sufficiently large bound $\rho>0$ for the set $\M$ from (\ref{eq:M}). Then for all triples $(\eta,\alpha,\delta)$ of positive real numbers with $\xbe \in \M$ there holds, denoting $c_0(\eta)$ by
$$c_0(\eta):=K\,\left(\frac{1}{\rho}+\frac{1}{\underline \kappa} +\eta\|x^\dagger\|_{\ell^\infty}\right),$$
the error estimate
\begin{equation} \label{eq:errgen}
 \begin{aligned}
\|\xbe-x^\dag\|_{\ell^1} \leq & \rho\left[\frac{\delta^{q}}{q\alpha}
           +  c_0(\eta)\delta+ (c_0(\eta))^{\frac{q}{q-1}}\alpha^{\frac{1}{q-1}}\right].
 \end{aligned}
 \end{equation}
For the standard a priori parameter choice $\alpha_*=\alpha(\delta):= \delta^{q-1}$, the error estimate
\begin{equation} \label{eq:Capriori}
\|x_{\alpha_*,\eta}^\delta-\xdag\|_{\1} \leq C\,\delta \quad \mbox{with} \quad C= \rho\left[\frac{1}{q}+c_0(\eta)
 +(c_0(\eta))^{\frac{q}{q-1}}\right],
\end{equation}
holds true for $\eta>0$ and $\rho>0$ satisfying the condition (\ref{eq:relation}) and for sufficiently small $\delta>0$.
\end{thm}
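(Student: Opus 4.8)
The plan is to combine the variational inequality of Lemma~\ref{lem_2:1}, specialized to $\mathcal{R}(x)=\frac{\eta}{2}\|x\|^2_{\2}$, with the minimizing property of $\xbe$ and a Young-type inequality to balance the misfit term against $\alpha$. First I would record that, by Remark~\ref{rem:ass3} and the computation opening Section~\ref{se4}, Assumption~\ref{ass:basic3} holds with source element $w=\sum_{k:\,\xi^\dag_k\neq 0}\xi^\dag_k\omega_k$ and $\|w\|_{Y^*}\le K\eta\|\xdag\|_{\ell^\infty}$, so the proof of Lemma~\ref{lem_2:1} yields, for all $x\in\M$,
\begin{equation*}
\tfrac{1}{\rho}\|x-\xdag\|_{\ell^1}\le \|x\|_{\ell^0}-\|\xdag\|_{\ell^0}+\tfrac{\eta}{2}\|x\|^2_{\2}-\tfrac{\eta}{2}\|\xdag\|^2_{\2}+c_0(\eta)\|A(x-\xdag)\|,
\end{equation*}
with precisely the constant $c_0(\eta)=K(\tfrac{1}{\rho}+\tfrac{1}{\underline\kappa}+\eta\|\xdag\|_{\ell^\infty})$ claimed in the statement (this is just $K(\tfrac{1}{\rho}+\tfrac{1}{\underline\kappa})+\|w\|_{Y^*}$ with the bound on $\|w\|_{Y^*}$ inserted).

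Next I would insert $x=\xbe\in\M$ into this inequality and use $\Phi_\alpha^\delta(\xbe)\le\Phi_\alpha^\delta(\xdag)$, i.e. $\tfrac{1}{q}\|A\xbe-\yd\|^q+\alpha\Omega(\xbe)\le\tfrac{1}{q}\delta^q+\alpha\Omega(\xdag)$, which rearranges to $\|\xbe\|_{\ell^0}-\|\xdag\|_{\ell^0}+\tfrac{\eta}{2}\|\xbe\|^2_{\2}-\tfrac{\eta}{2}\|\xdag\|^2_{\2}\le\tfrac{1}{\alpha}\big(\tfrac{\delta^q}{q}-\tfrac{1}{q}\|A\xbe-\yd\|^q\big)$. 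Substituting and dropping the now-subtracted misfit term (or keeping it to be absorbed), the bound becomes
\begin{equation*}
\tfrac{1}{\rho}\|\xbe-\xdag\|_{\ell^1}\le\tfrac{\delta^q}{q\alpha}-\tfrac{1}{q\alpha}\|A\xbe-\yd\|^q+c_0(\eta)\|A(\xbe-\xdag)\|.
\end{equation*}
Then I would estimate $\|A(\xbe-\xdag)\|\le\|A\xbe-\yd\|+\|\yd-y\|\le\|A\xbe-\yd\|+\delta$ (using $y=A\xdag$ and (\ref{eq:noise})), giving a $c_0(\eta)\delta$ term directly and a $c_0(\eta)\|A\xbe-\yd\|$ term that must be played off against $-\tfrac{1}{q\alpha}\|A\xbe-\yd\|^q$. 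Here Young's inequality $ab\le\tfrac{a^q}{q}+\tfrac{b^{q'}}{q'}$ with $q'=\tfrac{q}{q-1}$ applied to $a=\alpha^{-1/q}\|A\xbe-\yd\|$ and $b=c_0(\eta)\alpha^{1/q}$ produces $c_0(\eta)\|A\xbe-\yd\|\le\tfrac{1}{q\alpha}\|A\xbe-\yd\|^q+\tfrac{1}{q'}(c_0(\eta))^{q'}\alpha^{q'-1}$, and $q'-1=\tfrac{1}{q-1}$, so the quadratic-in-misfit terms cancel and what remains is exactly $\tfrac{\delta^q}{q\alpha}+c_0(\eta)\delta+(c_0(\eta))^{q/(q-1)}\alpha^{1/(q-1)}$ up to the factor $\tfrac{1}{q'}\le 1$ that can be discarded. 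Multiplying through by $\rho$ gives (\ref{eq:errgen}).

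Finally, for the a priori choice $\alpha_*=\delta^{q-1}$, I would substitute: the first term becomes $\tfrac{\delta^q}{q\delta^{q-1}}=\tfrac{\delta}{q}$, the second is $c_0(\eta)\delta$, and the third is $(c_0(\eta))^{q/(q-1)}(\delta^{q-1})^{1/(q-1)}=(c_0(\eta))^{q/(q-1)}\delta$; factoring out $\rho\delta$ yields (\ref{eq:Capriori}). The premise $\xbe\in\M$ for small $\delta$ is guaranteed by the discussion preceding the theorem once $\rho$ satisfies (\ref{eq:relation}) and $\delta^q/\alpha_*\to 0$, which holds for $\alpha_*=\delta^{q-1}$. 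The only genuinely delicate point is making sure the signs work out so that the $-\tfrac{1}{q\alpha}\|A\xbe-\yd\|^q$ coming from the minimality inequality exactly cancels the Young-inequality remainder on the $c_0(\eta)\|A\xbe-\yd\|$ term; everything else is bookkeeping. I should double-check whether the paper intends $p$ or $q$ in the a priori rule (the statement of Theorem~\ref{pro:approx} writes $\delta^p$ while Theorem~\ref{thm:main} writes $\delta^{q-1}$), but for Theorem~\ref{th_2:1} the exponent is unambiguously $q-1$.
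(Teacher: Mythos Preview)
Your proposal is correct and follows essentially the same route as the paper: invoke the variational inequality of Lemma~\ref{lem_2:1} with the specific constant $c_0(\eta)=K(\tfrac{1}{\rho}+\tfrac{1}{\underline\kappa})+\|w\|_{Y^*}\le K(\tfrac{1}{\rho}+\tfrac{1}{\underline\kappa}+\eta\|\xdag\|_{\ell^\infty})$, combine it with the minimizing inequality $\Phi_\alpha^\delta(\xbe)\le\Phi_\alpha^\delta(\xdag)$, split $\|A(\xbe-\xdag)\|\le\|A\xbe-\yd\|+\delta$, and then use Young's inequality with conjugate exponents $q,\,q'=q/(q-1)$ to cancel the $\tfrac{1}{q}\|A\xbe-\yd\|^q$ term against the $c_0(\eta)\|A\xbe-\yd\|$ term. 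The only cosmetic difference is that the paper keeps $\tfrac{1}{q}\|A\xbe-\yd\|^q$ on the left and applies Young to $\alpha c_0(\eta)\|A\xbe-\yd\|$, whereas you first divide through by $\alpha$ and distribute $\alpha^{1/q}$ between the two Young factors; the resulting bound (including the harmless dropped factor $\tfrac{q-1}{q}<1$) is identical, and the substitution $\alpha_*=\delta^{q-1}$ for \eqref{eq:Capriori} is the same in both.
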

\begin{proof}
Since $\xbe$ is a minimizer of functional $\Phi_\alpha$,
inserting the inequality (\ref{eq_2:1}), we have
\begin{equation}\label{eq_Theoremineq1}
\begin{aligned}
 \frac{1}{q}\|A\xbe-y^{\delta}\|^{q}\leq&\frac{\delta^q}{q}
       + \alpha(\Omega(x^\dag)-\Omega(\xbe))\\
 \leq & \frac{\delta^{q}}{q}
       -\frac{\alpha}{\rho}\|\xbe-x^\dag\|_{\ell^1}
       +\alpha c_0(\eta)\|A(\xbe-x^\dag)\| \\
 \leq & \frac{\delta^{q}}{q}
        -\frac{\alpha}{\rho}\|\xbe-x^\dag\|_{\ell^1}
           +\alpha c_0(\eta)(\|A\xbe-y^\delta\|+\delta).
\end{aligned}
\end{equation}


As $q>1$ we use Young's inequality with the dual exponent $q'= q/(q-1)$ as $ab\leq a^q/q+b^{q'}/q',\;a,b>0,$ to obtain
 $$ \alpha c_0(\eta) \|A\xbe-y^\delta\| \leq \frac{(q-1)}{q}\, (\alpha c_0(\eta))^{\frac{q}{q-1}}\,
   + \frac{1}{q}\|A\xbe-y^{\delta}\|^{q}.$$
 Combining this with \eqref{eq_Theoremineq1} we derive (\ref{eq:errgen}) and obviously
obtain further (\ref{eq:Capriori}) for the
 a priori parameter choice $\alpha_*:=\delta^{q-1}$.
 This proves the theorem.
 \end{proof}

To keep the constant $C$ in (\ref{eq:Capriori}) small,
it seems that the bound $\rho>0$ would have to be prescribed as small as possible, which implies that $\eta$ should be large enough.
On the other hand, the value $c_0(\eta)>0$ would have to be prescribed as small as possible as well, which implies that $\eta$ should be small. As will be shown below in Section \ref{se5}, the numerical error estimates first decreases and increases later when $\eta$ decreases from large to small values.

%

For the a posteriori parameter choice based on the sequential discrepancy principle, we can also establish an error estimate with clear dependence of the coefficient $\eta$.
\begin{thm}\label{th_2:sdp}
Under the conditions of Theorem~\ref{th_2:1} consider the regularization parameter choice based on the sequential discrepancy principle.
Suppose $\alpha_*=\alpha_{j_*} \in \Delta_\nu$ is chosen according to
$$\|Ax_{\alpha_{j_*},\eta}^\delta-\yd\| \le \tau\,\delta <  \|Ax_{\alpha_{(j_*-1)},\eta}^\delta-\yd\| .$$
Then the $\ell^1$ error estimate
\begin{align*}
\|x_{\alpha_{j_*},\eta}^{\delta}-x^\dag\|_{\ell^1} & \leq  C_2 \delta
\end{align*}
holds true with the constant
$$C_2 =\rho \,c_0(\eta)\,
\left[\left(\frac{2}{\tau-1}\right)^{q-1}\frac{\tau^{q}+1}{\tau^{q}-1}\frac{1}{\nu}+\tau+1\right]
.$$
\end{thm}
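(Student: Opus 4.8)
The plan is to mimic the proof of Theorem~\ref{th_2:1}, but now exploiting the defining inequalities of the sequential discrepancy principle in order to eliminate the free parameter $\alpha$ and produce the clean constant $C_2$. First I would start from the minimality $\Phi_{\alpha_{j_*}}^\delta(x_{\alpha_{j_*},\eta}^\delta) \le \Phi_{\alpha_{j_*}}^\delta(\xdag)$, which combined with (\ref{eq:noise}) gives $\|Ax_{\alpha_{j_*},\eta}^\delta - \yd\|^q/q + \alpha_{j_*}\Omega(x_{\alpha_{j_*},\eta}^\delta) \le \delta^q/q + \alpha_{j_*}\Omega(\xdag)$; inserting the variational inequality (\ref{eq_2:1}) (valid since $x_{\alpha_{j_*},\eta}^\delta \in \M$ by hypothesis and Lemma~\ref{lem_2:1} applies with $c_0 = c_0(\eta)$ as in Theorem~\ref{th_2:1}) yields, after dropping the nonnegative misfit term on the left,
\begin{equation*}
\frac{\beta}{1}\,\|x_{\alpha_{j_*},\eta}^\delta - \xdag\|_{\1} \le \frac{\delta^q}{q\alpha_{j_*}} + c_0(\eta)\,\|A(x_{\alpha_{j_*},\eta}^\delta - \xdag)\|,
\end{equation*}
with $\beta = 1/\rho$. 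The triangle inequality together with the upper discrepancy bound $\|Ax_{\alpha_{j_*},\eta}^\delta - \yd\| \le \tau\delta$ and (\ref{eq:noise}) bounds the last term by $(\tau+1)\delta$, so it remains only to control $\delta^q/(q\alpha_{j_*})$ by a constant times $\delta$, i.e.\ to show $\delta^{q-1}/\alpha_{j_*} = \bigo(1)$ with an explicit constant.

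Second, I would obtain the required lower bound on $\alpha_{j_*}$ from the \emph{lower} inequality $\tau\delta < \|Ax_{\alpha_{(j_*-1)},\eta}^\delta - \yd\|$ of the sequential discrepancy principle. Writing $\beta_* := \alpha_{(j_*-1)} = \alpha_{j_*}/\nu$, I use minimality of $x_{\beta_*,\eta}^\delta$ for $\Phi_{\beta_*}^\delta$ once more: $\frac1q\|Ax_{\beta_*,\eta}^\delta - \yd\|^q + \beta_* \Omega(x_{\beta_*,\eta}^\delta) \le \frac1q\delta^q + \beta_*\Omega(\xdag)$. The subtle point is that the variational inequality only controls $\Omega(x) - \Omega(\xdag)$ from below by something involving $\|A(x-\xdag)\|$ and the $\ell^1$-distance, but here $x_{\beta_*,\eta}^\delta$ need not lie in $\M$; the standard way around this (as in \cite{HofMat12,AnzHofMat13}) is to note that in the relevant range $\beta_*$ is comparable to $\alpha_{j_*}$, which does give $x_{\beta_*,\eta}^\delta \in \M$ for $\delta$ small, or alternatively to bound $\Omega(\xdag) - \Omega(x_{\beta_*,\eta}^\delta)$ by $c_0(\eta)\|A(x_{\beta_*,\eta}^\delta - \xdag)\|$ directly from (\ref{eq:ab1}) and (\ref{eq:start}) without the $\ell^1$-term. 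Using $\|A(x_{\beta_*,\eta}^\delta-\xdag)\| \ge \|Ax_{\beta_*,\eta}^\delta - \yd\| - \delta > (\tau-1)\delta$ and, for the upper estimate, the Young-type splitting $\beta_* c_0(\eta)\|Ax_{\beta_*,\eta}^\delta - \yd\| \le \frac{q-1}{q}(\beta_* c_0(\eta))^{q/(q-1)} + \frac1q\|Ax_{\beta_*,\eta}^\delta - \yd\|^q$ exactly as in Theorem~\ref{th_2:1}, one arrives at $\frac{(\tau\delta)^q}{q} < \|Ax_{\beta_*,\eta}^\delta - \yd\|^q/q \le \frac{\delta^q}{q} + \frac{q-1}{q}(\beta_* c_0(\eta))^{q/(q-1)} + (\text{misfit term})$. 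Rearranging and solving for $\beta_*$ gives a lower bound of the form $\beta_* \ge \mathrm{const}\cdot\delta^{q-1}$ with the constant built from $\tau$, $q$ and $c_0(\eta)$; dividing by $\nu$ transfers it to $\alpha_{j_*}$.

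Third, I would collect the pieces: substituting the lower bound $\alpha_{j_*} \ge c_*\,\delta^{q-1}$ into $\delta^q/(q\alpha_{j_*}) \le \delta/(q c_*)$ and combining with the $(\tau+1)\delta$ term, then multiplying through by $\rho = 1/\beta$, produces $\|x_{\alpha_{j_*},\eta}^\delta - \xdag\|_{\1} \le C_2\delta$ with $C_2$ of the displayed shape $\rho\,c_0(\eta)\big[(2/(\tau-1))^{q-1}\frac{\tau^q+1}{\tau^q-1}\frac1\nu + \tau + 1\big]$ — the first bracketed summand being precisely what the bookkeeping of the lower bound on $\alpha_{j_*}$ delivers, and the $\tau+1$ coming from the triangle-inequality estimate of the misfit. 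The main obstacle I anticipate is the second step: making the lower bound on $\alpha_{j_*}$ fully rigorous requires either verifying $x_{\alpha_{(j_*-1)},\eta}^\delta \in \M$ (so that (\ref{eq_2:1}) is available there) or, more cleanly, re-deriving the needed one-sided estimate $\Omega(\xdag)-\Omega(x) \le c_0(\eta)\|A(x-\xdag)\|$ that holds for \emph{all} $x \in \ell^0$ without the sublevel-set restriction — the latter follows by dropping the nonnegative $\ell^1$-term and the nonnegative quantity $\|x\|_{\ell^0}-\|\xdag\|_{\ell^0}+\frac1{\underline\kappa}\sum_{i\in I}|x_i^\dag-x_i|$ appropriately, exactly along the lines of (\ref{eq:ab1})–(\ref{eq:start}) — and then the rest is the routine Young-inequality bookkeeping that pins down the explicit constants in $C_2$.
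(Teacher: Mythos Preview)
Your overall three-step plan matches the paper: first bound the $\ell^1$-error by $\rho\,[\delta^q/(q\alpha_{j_*}) + c_0(\eta)(\tau+1)\delta]$ via the variational inequality and the upper discrepancy bound, then obtain a lower bound on $\alpha_{j_*}$ of order $\delta^{q-1}$ from the over-discrepancy at $\alpha_{j_*-1}$, and finally combine. You also correctly isolate the crucial one-sided estimate $\Omega(\xdag)-\Omega(x) \le c_0(\eta)\|A(x-\xdag)\|$, which indeed follows from (\ref{eq:ab1})--(\ref{eq:start}) after dropping the nonnegative $\ell^1$-term and holds without the restriction $x \in \M$; the paper uses exactly this inequality at $x=x_{\alpha_{j_*-1},\eta}^\delta$.

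The gap is in your mechanism for step~2. Applying Young's inequality to $\beta_* c_0(\eta)\|Ax_{\beta_*,\eta}^\delta - \yd\|$ adds $\tfrac{1}{q}\|Ax_{\beta_*,\eta}^\delta - \yd\|^q$ back to the right-hand side of the minimality estimate, which cancels the same quantity on the left; what survives is the tautology $0 \le \tfrac{\delta^q}{q} + \tfrac{q-1}{q}(\beta_* c_0(\eta))^{q/(q-1)} + \beta_* c_0(\eta)\delta$, from which no lower bound on $\beta_*$ can be read off. (If instead your ``misfit term'' is meant to retain $\tfrac{1}{q}\|Ax_{\beta_*,\eta}^\delta-\yd\|^q$ uncancelled on the right, the inequality is useless because that term is not bounded above.) The paper does not use Young here. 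From minimality, the one-sided bound and the over-discrepancy it first records $\tfrac{\delta^q}{q}(\tau^q-1) \le \tfrac{c_0(\eta)\alpha_{j_*}}{\nu}\|A(x_{\alpha_{j_*-1},\eta}^\delta-\xdag)\|$; it then applies the convexity inequality $(a+b)^q \le 2^{q-1}(a^q+b^q)$ to $\|A(x_{\alpha_{j_*-1},\eta}^\delta-\xdag)\|^q$, feeds the previous display back in, and cancels one factor to obtain $\|A(x_{\alpha_{j_*-1},\eta}^\delta-\xdag)\|^{q-1} \le q\,2^{q-1}\,\tfrac{\tau^q+1}{\tau^q-1}\,\tfrac{c_0(\eta)\alpha_{j_*}}{\nu}$. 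Inserting the lower bound $(\tau-1)\delta \le \|A(x_{\alpha_{j_*-1},\eta}^\delta-\xdag)\|$ then yields $\tfrac{c_0(\eta)\alpha_{j_*}}{\nu} \ge \tfrac{1}{q}\,\tfrac{\tau^q-1}{\tau^q+1}\big(\tfrac{\tau-1}{2}\big)^{q-1}\delta^{q-1}$, which is precisely the source of the first summand in $C_2$.
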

\begin{proof}
The inequality (\ref{eq_2:1}) provides us with
\begin{equation}\label{ine:x1}
\begin{aligned}
&\|x-x^\dag\|_{\ell^1} \leq  \rho\left[\Omega(x)-\Omega(x^\dag)
      + c_0 (\eta)\|Ax-Ax^\dag\|\right]\\
\end{aligned}
\end{equation}
and
\begin{equation}\label{DP:1}
\begin{aligned}
\Omega(x^\dag)-\Omega(x)\leq
      c_0 (\eta)\|Ax-Ax^\dag\|.
\end{aligned}
\end{equation}
Moreover, by fixing any parameter set $(\eta,\alpha,\delta)$ we have for the minimizer $x_{\alpha,\eta}^\delta$ that
\[
\frac{1}{q}\|A\xbe-y^{\delta}\|^{q}+\alpha\Omega(\xbe)\leq\frac{\delta^q}{q}
       + \alpha\Omega(x^\dag),
\]
and consequently
\begin{equation}\label{ine:ome}
\begin{aligned}
\Omega(\xbe)-\Omega_{\alpha}(x^\dag)\leq \frac{\delta^q}{q\alpha}.
\end{aligned}
\end{equation}

Choosing another parameter $\alpha_{j_*-1}$, we derive
\begin{align}\label{DP:2}
\frac{1}{q}\|Ax_{\alpha_{j_*-1},\eta}^{\delta}-y^{\delta}\|^{q}
& \leq \frac{1}{q}\|Ax^{\dag}-y^{\delta}\|^{q}
+ \alpha_{j_*-1}\left(\Omega(x^\dag)-\Omega(x_{\alpha_{j_*-1},\eta}^{\delta})\right) \nonumber \\
& \stackrel{(\ref{DP:1})}{\leq} \frac{\delta^q}{q}
+\frac{c_0 (\eta)\alpha_{j_*}}{\nu}\|Ax_{\alpha_{j_*-1},\eta}^{\delta}-Ax^\dag\|.
\end{align}
Noticing the fact that $\|Ax_{\alpha_{j_*-1},\eta}^{\delta}-y^\delta\|>\tau \delta$, we thus obtain
\begin{equation}\label{eq_thm31deltainequal}
\begin{aligned}
\frac{\delta^{q}}{q}
\leq \frac{c_0(\eta)\alpha_{j_*}}{(\tau^{q}-1)\nu}\|A x_{\alpha_{j_*-1},\eta}^{\delta}-A x^\dag\|.
\end{aligned}
\end{equation}
On the other hand, by $(a+b)^q\leq 2^{q-1}(a^q+b^q), a,b\geq 0, q\geq 1$ and the inequalities (\ref{DP:2})-(\ref{eq_thm31deltainequal}), we can derive
\begin{equation*}
\begin{aligned}
\frac{\|Ax_{\alpha_{j_*-1},\eta}^{\delta}-Ax^{\dag}\|^q}{q} & \leq 2^{q-1}\left(\frac{\delta^q}{q} +
\frac{1}{q}\|Ax_{\alpha_{j_*-1},\eta}^{\delta}-y^{\delta}\|^{q}\right)\\
&\stackrel{(\ref{DP:2})}{\leq} 2^q\frac{\delta^q}{q}
+2^{q-1}\frac{c_0(\eta)\alpha_{j_*}}{\nu}\|Ax_{\alpha_{j_*-1},\eta}^{\delta}-Ax^\dag\|\\
&\stackrel{(\ref{eq_thm31deltainequal})}{\leq} 2^{q-1}\frac{\tau^{q}+1}{\tau^{q}-1}\frac{c_0(\eta)\alpha_{j_*}}{\nu}\|Ax_{\alpha_{j_*-1},\eta}^{\delta}-Ax^\dag\|,
\end{aligned}
\end{equation*}
that is,
\begin{equation*}
\begin{aligned}
\frac{c_0(\eta)\alpha_{j_*}}{\nu}
\geq\frac{\tau^{q}-1}{\tau^{q}+1}\frac{1}{2^{q-1}q}\|Ax_{\alpha_{j_*-1},\eta}^{\delta}-Ax^\dag\|^{q-1}.
\end{aligned}
\end{equation*}
Substituting $(\tau-1)\delta\leq\|Ax_{\alpha_{j_*-1},\eta}^{\delta}-Ax^{\dag}\|$ into previous inequality, we can derive
\begin{equation}\label{3:pa}
\begin{aligned}
\frac{c_0(\eta)\alpha_{j_*}}{\nu}\geq  \frac{1}{q} \frac{\tau^{q}-1}{\tau^{q}+1}\left(\frac{\tau-1}{2}\right)^{q-1}\delta^{q-1}.
\end{aligned}
\end{equation}

Finally, combining \eqref{ine:x1}, \eqref{ine:ome} and \eqref{3:pa}, we obtain
\begin{equation*}
\begin{aligned}
\|x_{\alpha_{j_*},\eta}^{\delta}-x^\dag\|_{\ell^1}&\leq \rho\left[\frac{\delta^q}{q\alpha_{j_*}}+c_0(\eta)(\tau+1)\delta\right]\\
&\leq \rho c_0(\eta)\left[
\left(\frac{2}{\tau-1}\right)^{q-1}\frac{\tau^{q}+1}{(\tau^{q}-1)\nu}\delta+(\tau+1)\delta\right]
\end{aligned}
\end{equation*}
which completes the proof.
\end{proof}

As one can observe, constants near error estimates of Theorems \ref{th_2:1} and \ref{th_2:sdp} highly depend on the coefficient $\eta$ and the exact solution $x^{\dag}$. Even if one cannot know the exact solution, the
subsequent section will show that a more sparse approximate solution is expected when the factor $\eta$ becomes small.

\section{Numerical examples and extended discussion}
\label{se5}
\subsection{Numerical examples}
In this section, we provide some numerical examples verifying and illustrating the theoretical predictions of the previous sections, for linear forward operators in the first subsection, and
for a nonlinear test example in the second subsection.
\subsubsection{The discretized linear case}
The following finite-dimensional ill-posed problem in the sequence space is considered such that
$$A\mathbf{x}= \mathbf{y} + \omega = \mathbf{y}^{\delta},$$
where $\omega$ is a Gaussian random variable with zero mean and a variance matrix $\delta^2 I$.

We use the Tikhonov type functional
\begin{align}\label{eq_TikBayFun}
\Phi_{\alpha}^{\delta}({\bf x}):=\frac{1}{2}\|A{\bf x}-\mathbf{y}^{\delta}\|^{2}+ \alpha \left(\|{\bf x}\|_{\ell^0}+ \mathcal{R}({\bf x})\right),
\end{align}
with $\mathcal{R}(x):=\frac{\eta}{2} \|\mathbf{x}\|_{\ell^2}^2$ or $\mathcal{R}(x):=\frac{\eta}{p} \|\mathbf{x}\|_{\ell^p}^p\;(1<p<2)$ respectively.
Noticing that the difficulty of obtaining the minimizer of $\Phi_{\alpha}^{\delta}$ remains,  we adopt the Tikhonov type functional (\ref{eq_TikBayFun}) into Bayesian interfaces as in \cite{WLMC13}. More precisely, we introduce the prior density, in the finite-dimensional regime,
\begin{align*}
p(\mathbf{x})\propto \exp\left(- \tilde{\alpha} \left(\|{\bf x}\|_{\ell^0}+ \mathcal{R}({\bf x})\right)\right).
\end{align*}
The posterior distribution then has a density
\begin{equation}\label{eq_ppdf}
\begin{aligned}
p(\mathbf{x}|\mathbf{y}^\delta)\propto\exp\left(-\frac{1}{2\delta^2}\|A\mathbf{x}-\mathbf{y}^\delta\|^2
- \tilde{\alpha} \left(\|{\bf x}\|_{\ell^0}+ \mathcal{R}({\bf x})\right)\right).
\end{aligned}
\end{equation}
Then the maximum a posteriori (MAP) estimate $\hat{\mathbf{x}}$ of (\ref{eq_ppdf}) coincides with the minimizer of (\ref{eq_TikBayFun}) with $\alpha = \tilde{\alpha}\delta^2$. To better illustrate the influence of the convex/stabilizing functional $\mathcal{R}(x)$, as mentioned above, we consider the
following regularization schemes
\begin{equation}\label{eq_twoRx}
\begin{aligned}
& \Phi_{\alpha,2}^{\delta}({\bf x}):= \frac{1}{2}\|A\mathbf{x}-\mathbf{y}^\delta\|^2+ \alpha\left(\|\mathbf{x}\|_{\ell^0}+\frac{\eta}{2}\|\mathbf{x}\|_{\ell^2}^2\right);\\
& \Phi_{\alpha,1.1}^{\delta}({\bf x}):= \frac{1}{2}\|A\mathbf{x}-\mathbf{y}^\delta\|^2+ \alpha\left(\|\mathbf{x}\|_{\ell^0}+\frac{\eta}{1.1}\|\mathbf{x}\|_{\ell^{1.1}}^{1.1}\right).
\end{aligned}
\end{equation}
We shall emphasize that the latter penalty functional $\Omega(x)=\|\mathbf{x}\|_{\ell^0}+\frac{\eta}{1.1}\|\mathbf{x}\|_{\ell^{1.1}}^{1.1}$ has a better sparsity promoting properties compared with the former one.

To obtain a single approximant from the posterior distribution density $p(\mathbf{x}|\mathbf{y}^\delta)$ given a known observation $\mathbf{y}^\delta$, we can either choose an MAP estimate or a posterior mean (PM) estimate where the latter one needs to integrate the posterior probability densities numerically. In current section, we implement the Gibbs sampler to construct transition kernels in the MCMC method. For extended discussion on statistical inverse problems and MCMC methods, we refer to the monograph \cite{bayes05} and references therein.

The first example considers the numerical differentiation solving the Volterra integral equation of the first kind
\begin{align*}
Ax(s) = \int_0^s x(t)dt, \, s\in (0,1)
\end{align*}
where the solution $x$ is equally discretized with $128$ mesh points in the interval $(0,1)$ and the exact solution vector ${\bf x}$ contains $128$ coefficients with $6$ non-zero ones.
The noise $\omega$ is added to the exact data ${\bf y}$  with a noise covariance $\delta^2 = 10^{-6}$.
The regularization parameter $\tilde{\alpha}$ in the posterior distribution density $p(\mathbf{x}|\mathbf{y}^\delta)$ is chosen in a regularization parameter set $\{2^{-k}/\delta^2,k=0,1,\ldots\}$ where the chosen $\tilde{\alpha}$ provides a slightly larger residual with respect to the noise level. Recalling Theorem \ref{th_2:sdp}, such a parameter choice rule obeys the sequential discrepancy principle there.
In order to keep the numerical realization consistent to each other, we use the Gibbs sampler (see e.g. \cite[Section 3.6.3]{bayes05}) to obtain both minimizers of the two regularization schemes in (\ref{eq_twoRx}). The sample size of the MCMC methods is chosen by $10^5$. Except the MAP estimate $\hat{x}_{MAP}$, we also generate the PM estimate $\hat{x}_{PM}$ by choosing a suitable burn-in period $[0,5000]$.
We summarize the quantity information for both MAP and PM estimates in Table \ref{tab:1} where the $\ell^2$ and $\ell^1$ relative errors take the standard form, for instance, with $\|\hat{x}-x^{\dag}\|_{\ell^2}/\|x^{\dag}\|_{\ell^2}$ and  $\|\hat{x}-x^{\dag}\|_{\ell^1}/\|x^{\dag}\|_{\ell^1}$. Here $\hat{x}$ represents either the MAP or PM estimate.
As Table \ref{tab:1} shows, the smaller $\eta$ the more sparse estimates one can obtain. On the other hand, Theorem \ref{th_2:sdp} has asserted that if one chooses the prescribed constant $\eta$ too large or too small, the constant near the $\ell^1$-norm error estimate becomes large for the penalty functional $\Omega(x)=\|\mathbf{x}\|_{\ell^0}+\frac{\eta}{2}\|\mathbf{x}\|_{\ell^{2}}^{2}$. Such observation is mostly verified by the quantity information in Table \ref{tab:1}. For instance, the PM estimate has a minimal $\ell^1$ relative error by choosing $\eta=1/2^4$. Though it is not easy to prove the same assertion for the stabilizing functional $\frac{\eta}{1.1}\|\mathbf{x}\|_{\ell^{1.1}}^{1.1}$, the numerical performance is analogous to that of the quadratic stabilizing one by varying $\eta$.

\begin{table}[htp]
 \caption{Numerical results of the first example with respect to the PM and MAP estimates.}\label{tab:1}
 The quantity information for the PM estimate.
    \begin {center}
    \begin{tabular}{cccccc}
     \hline
 &  Relative $\ell^2$ error &
   Relative  $\ell^1$ error &  $\|\hat{x}_{PM}\|_{\ell^0}$   \\
  \hline
$\ell^0+\ell^2$ scheme, $\eta/2 = 1$&  1.76e-1 &    3.64e-1    &  60\\
 $\ell^0+\ell^2$ scheme, $\eta/2 =1/2^{4}$& 5.11e-2 &    4.98e-2    &  6\\
 $\ell^0+\ell^2$ scheme, $\eta/2 = 1/2^{8}$ & 6.03e-2 &    5.76e-2    &  6\\
  $\ell^0+\ell^{1.1}$ scheme, $\eta/1.1 = 5$ & 1.10e-1  &9.75e-2 &  7 \\
 $\ell^0+\ell^{1.1}$ scheme, $\eta/1.1 = 1$ & 5.45e-2  &5.08e-2 &  6\\
$\ell^0+\ell^{1.1}$ scheme, $\eta/1.1 = 1/2^{4}$ &  5.71e-2 &    5.27e-2   & 6\\
$\ell^0+\ell^{1.1}$ scheme, $\eta/1.1 = 1/2^{8}$ &  6.28e-2 &    5.59e-2   & 6\\
\hline
    \end{tabular}\\[5mm]
    \end{center}
 The quantity information for the MAP estimate.
    \begin {center}
    \begin{tabular}{cccccc}
     \hline
 &  Relative $\ell^2$ error &
   Relative  $\ell^1$ error &  $\|\hat{x}_{MAP}\|_{\ell^0}$   \\
  \hline
$\ell^0+\ell^2$ scheme, $\eta/2 = 1$&  1.76e-1 &   2.86e-1   & 15\\
 $\ell^0+\ell^2$ scheme, $\eta/2 =1/2^{4}$& 5.62e-2 & 5.68e-2 & 6\\
 $\ell^0+\ell^2$ scheme, $\eta/2 = 1/2^{8}$ & 5.97e-2 &5.64e-2 &  6\\
  $\ell^0+\ell^{1.1}$ scheme, $\eta/1.1 = 5$ & 1.13e-1& 1.10e-1 & 7 \\
 $\ell^0+\ell^{1.1}$ scheme, $\eta/1.1 = 1$ &6.72e-2&6.40e-2&6\\
$\ell^0+\ell^{1.1}$ scheme, $\eta/1.1 = 1/2^{4}$ & 5.51e-2 & 5.00e-2 & 6\\
$\ell^0+\ell^{1.1}$ scheme, $\eta/1.1 = 1/2^{8}$ & 5.85e-2 & 5.54e-2 & 6\\
\hline
    \end{tabular}\\[5mm]
    \end{center}
 \end{table}

We select two particular choices of $\eta/p=1$, $\eta/p =1/2^4$ with $p=2$, $p=1.1$ and plot a logarithmic scale of the $\ell^2$ (and $\ell^1$) absolute error with respect to the absolute noise level~$\delta$ in Figure \ref{fig:ex1}. The reference black dashed line of order $O(\delta)$ is provided as well to verify the decaying speed of the numerical error estimates. Both choices of $\eta$ provide similar decaying speeds for small noise level but slightly vary when the noise level becomes large.
\begin{figure}[!ht]
  \centering
  \includegraphics[width=0.49\textwidth]{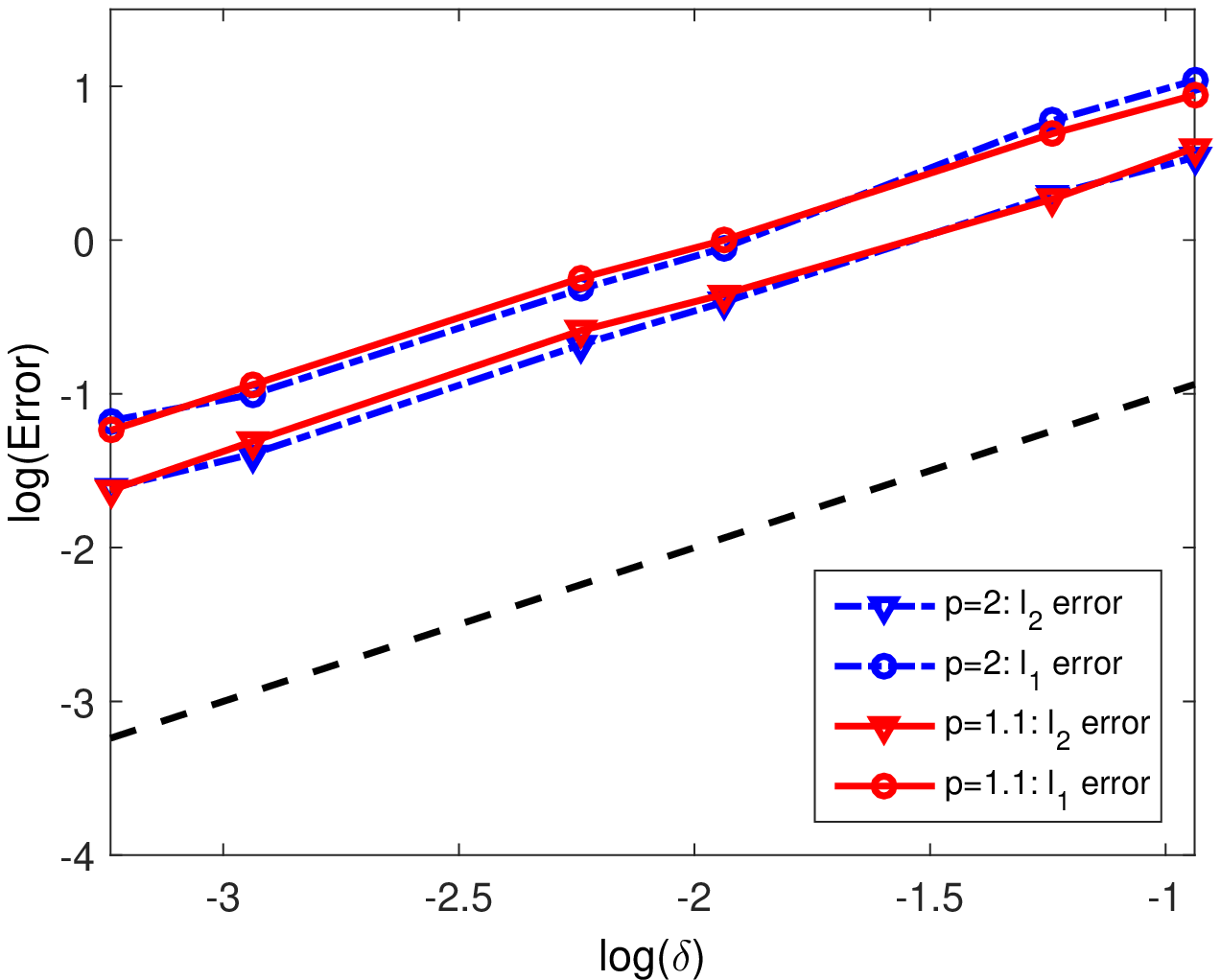}
  \includegraphics[width=0.49\textwidth]{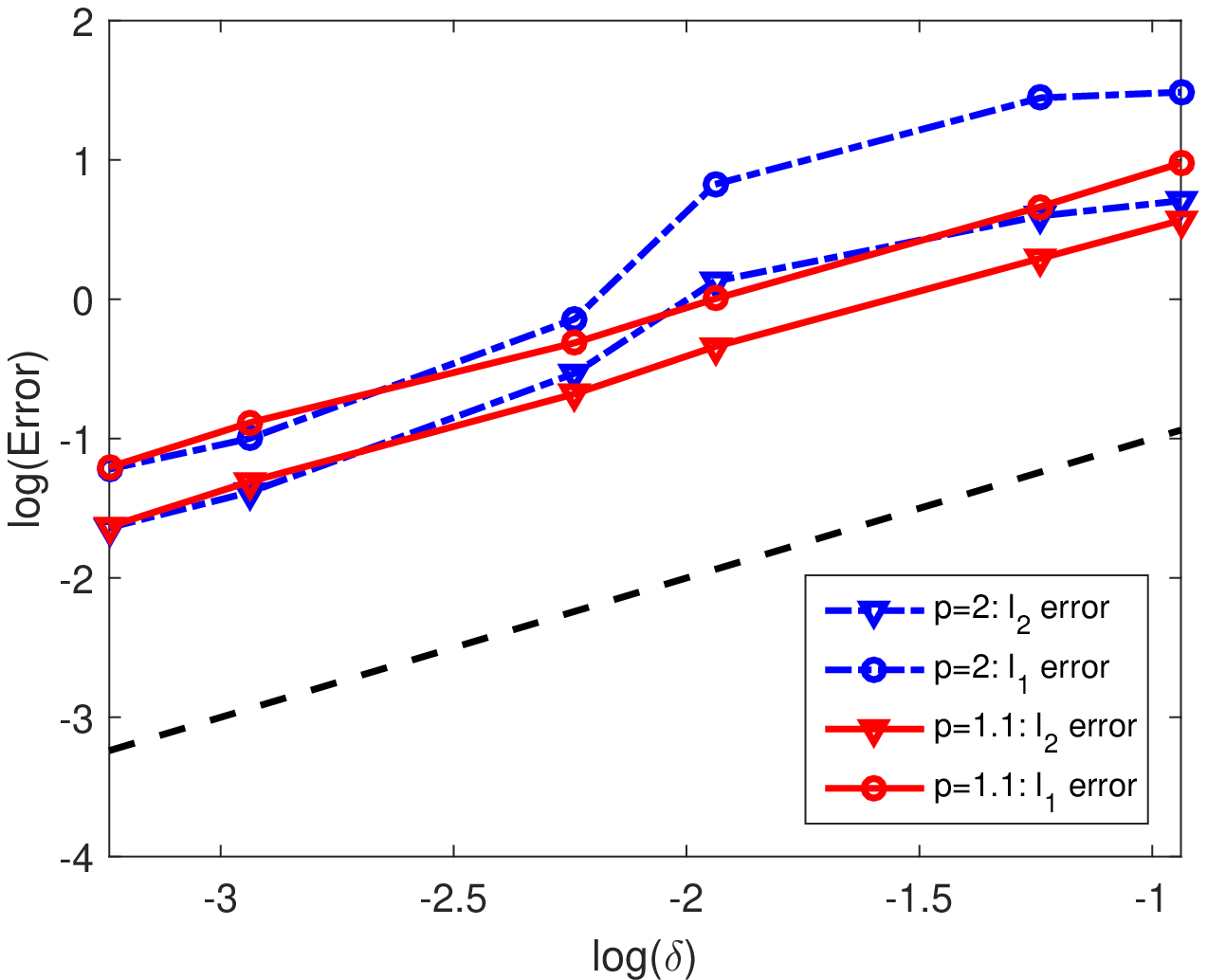}\\
  \caption{First example (PM): Absolute error versus noise level in logarithmic scales. Black dashed line is the reference line of order~$\delta$, Left: $\eta/p=1/2^4$, right: $\eta/p=1$. }\label{fig:ex1}
\end{figure}
\begin{figure}[!ht]
  \centering
  \includegraphics[width=0.49\textwidth]{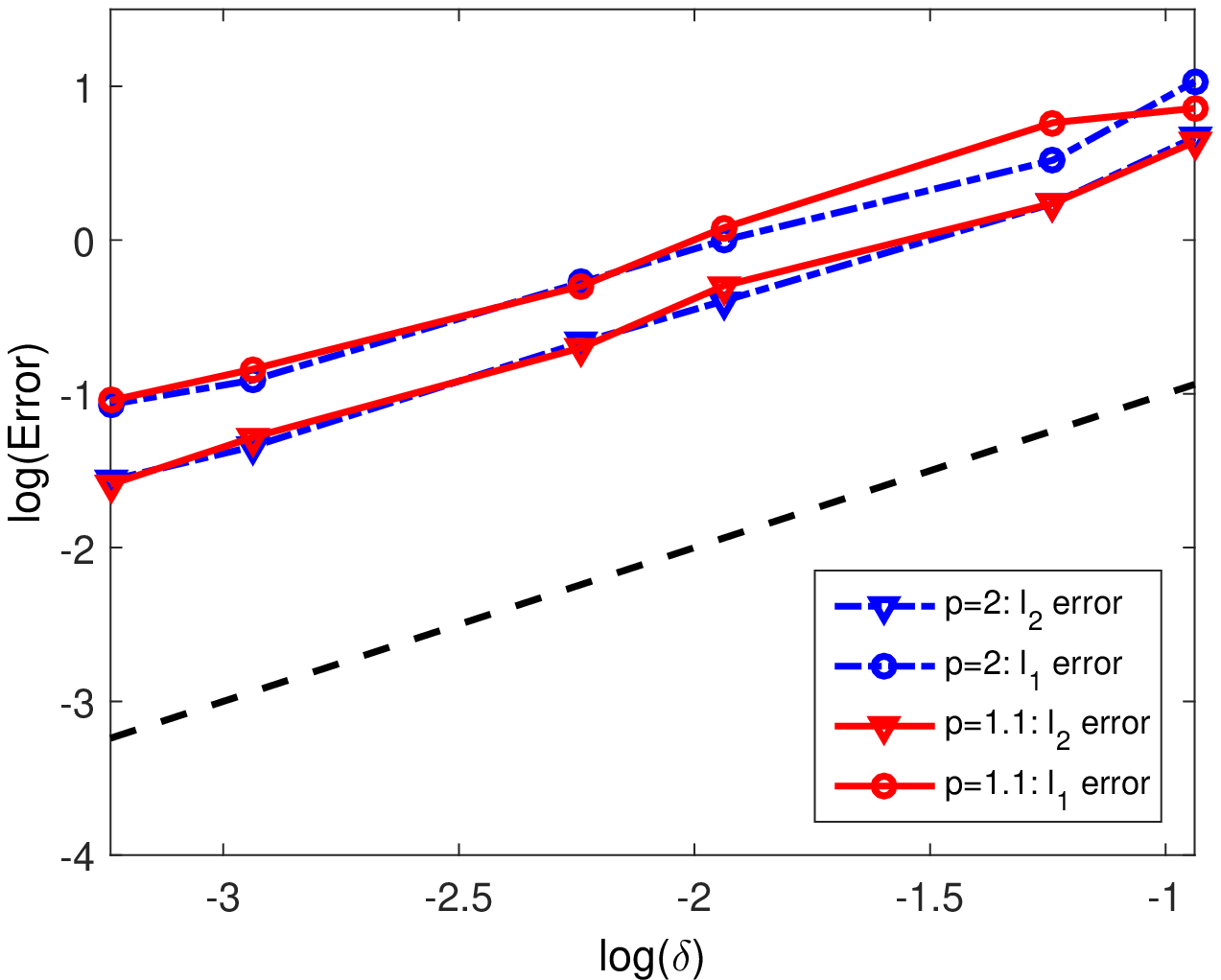}
  \includegraphics[width=0.49\textwidth]{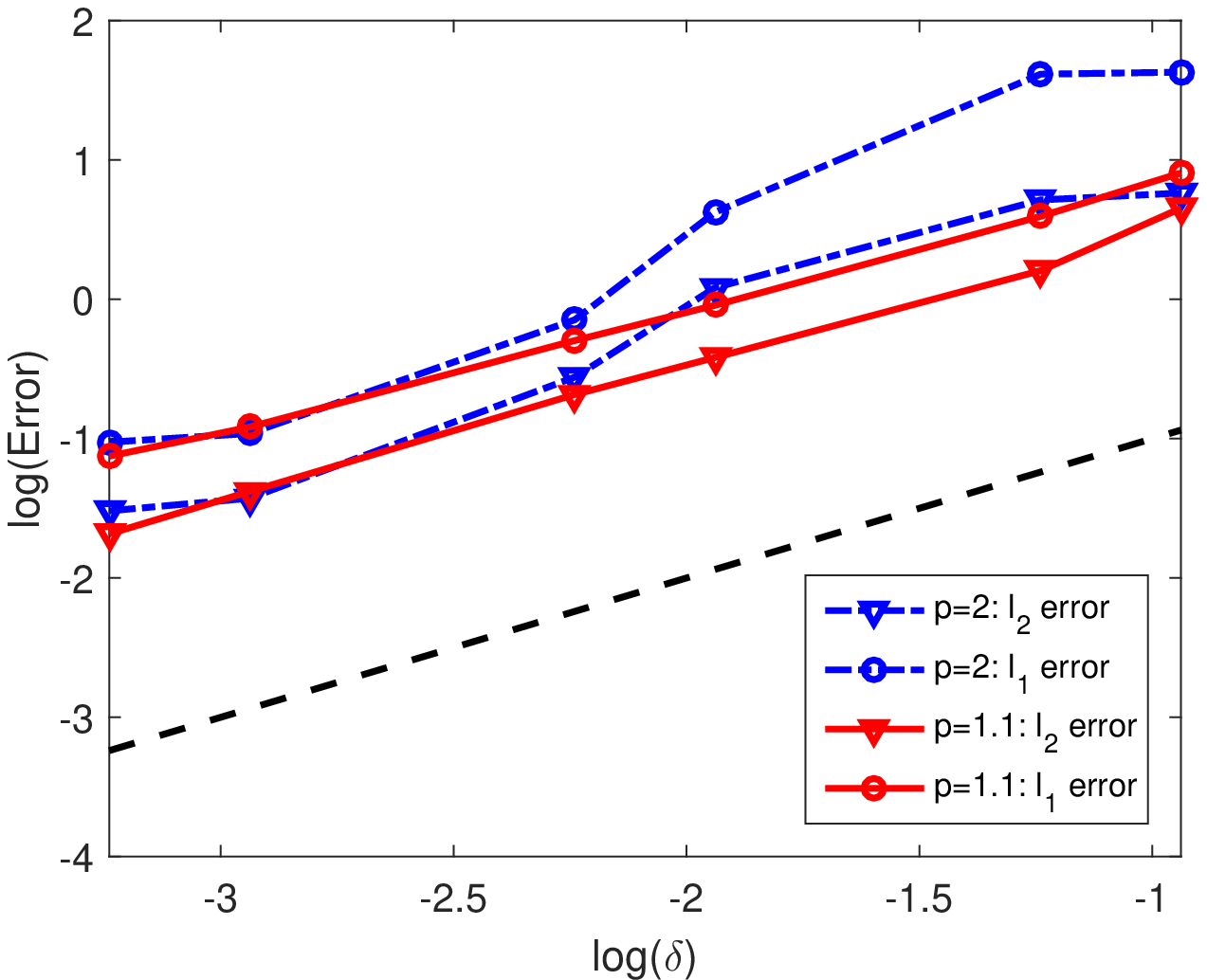}\\
  \caption{First example (MAP): Absolute error versus noise level in logarithmic scales. Black dashed line is the reference line of order~$\delta$, Left: $\eta/p=1/2^4$, right: $\eta/p=1$. }\label{fig:ex1b}
\end{figure}

\subsubsection{A nonlinear test example}

We also tested comparable approximate solutions for a discretized nonlinear operator equation $F(x) = y$, with a nonlinear Hammerstein forward operator
\begin{align*}
F(x)(s) = \int_0^s x^2(t) dt, \quad s\in (0,1).
\end{align*}
The regularized solutions are minimizers to a discretized version of the nonlinear analog
$$\Phi_{\alpha}^\delta(x):= \frac{1}{q}\|F(x)-y^\delta\|^q + \alpha\,\Omega(x) \to \min, \quad \mbox{subject to}\quad x \in X=\ell^2,$$
to the extremal problem (\ref{eq:Tiktwo}) with penalties $\Omega(x)=\|x\|_{\ell^0}+\frac{\eta}{p}\|x\|^p_{\ell^p}$ for situations $p=2$ and $1<p<2$.

This time, we implement a wavelet expansion to transform the exact solution $x$ into a sequential space.
More precisely, given the fixed orthonormal wavelet system $\{\varphi,
\psi\}$, the exact solution $x$ can be expanded in the following form
\begin{equation*}
\begin{aligned}
x &= \sum_{k\in \mathbb{Z}}\langle x, \phi_{0,k} \rangle \phi_{0,k} + \sum_j^{\infty}\sum_{k\in \mathbb{Z}}\langle x, \psi_{j,k} \rangle \psi_{j,k} =: \sum_{\lambda \in \Lambda}\langle x, \Phi_{\lambda} \rangle \Phi_{\lambda},
\end{aligned}
\end{equation*}
with an appropriately chosen
index set $\Lambda$. The sequential solution ${\bf x}$ is presented by
$${\bf x}=\{\langle
 x,\Phi_{\lambda}\rangle\}_{\lambda\in\Lambda}. $$
Choosing the Haar wavelets, we discretize the exact solution with $64$ coefficients and only $6$ of them are non-zero. Similar to the linear example, we summarize the quantity information in Table \ref{tab:2}. The logarithmic-scale fitting curve between the $\ell^2$ (and $\ell^1$) absolute error and the absolute noise level~$\delta$ are displayed in Figure \ref{fig:ex2}. Different from the linear cases, the smaller $\eta$, the better approximants towards the exact solution. One may guess that in the nonlinear cases, the balance between the stabilizing functional $\mathcal{R}$ and the $\ell^0$-term in the penalty is more sophisticated and shall be analyzed more carefully.


\begin{table}[htp]
 \caption{Numerical results of the second example with respect to the PM and MAP estimates.}\label{tab:2}
 The quantity information for the PM estimate.
    \begin {center}
    \begin{tabular}{cccccc}
     \hline
 &  Relative $\ell^2$ error &
   Relative  $\ell^1$ error &  $\|\hat{x}_{PM}\|_{\ell^0}$   \\
  \hline
$\ell^0+\ell^2$ scheme, $\eta/2 = 5$  & 5.93e-2 &   4.49e-2   &   28\\
$\ell^0+\ell^2$ scheme, $\eta/2  = 1$&  3.72e-2 &    2.97e-2    &  6\\
 $\ell^0+\ell^2$ scheme, $\eta/2  =1/2^{4}$& 2.55e-2 &    2.00e-2    &  6\\
  $\ell^0+\ell^{1.1}$ scheme, $\eta/1.1 = 5$ & 3.17e-2  & 2.20e-2 & 6 \\
 $\ell^0+\ell^{1.1}$ scheme, $\eta/1.1  = 1$ & 2.56e-2  &2.02e-2 & 6\\
$\ell^0+\ell^{1.1}$ scheme, $\eta/1.1 = 1/2^{4}$ &  1.44e-2 &    1.06e-2   & 6\\
\hline
    \end{tabular}\\[5mm]
    \end{center}
 The quantity information for the MAP estimate.
    \begin {center}
    \begin{tabular}{cccccc}
     \hline
 &  Relative $\ell^2$ error &
   Relative  $\ell^1$ error &  $\|\hat{x}_{MAP}\|_{\ell^0}$   \\
  \hline
$\ell^0+\ell^2$ scheme, $\eta/2  = 5$ & 5.87e-3 &   3.82e-2   &   8\\
$\ell^0+\ell^2$ scheme, $\eta/2  = 1$&  3.74e-2&   2.94e-2    & 6\\
 $\ell^0+\ell^2$ scheme, $\eta/2  =1/2^{4}$& 2.57e-2 & 2.00e-2 & 6\\
  $\ell^0+\ell^{1.1}$ scheme, $\eta/1.1=5$ & 2.89e-2 & 2.18e-2& 6 \\
 $\ell^0+\ell^{1.1}$ scheme, $\eta/1.1=1$ & 2.61e-2 & 2.03e-2 & 6\\
$\ell^0+\ell^{1.1}$ scheme, $\eta/1.1=1/2^{4}$ &  1.56e-2& 1.18e-2 & 6\\
\hline
    \end{tabular}\\[5mm]
    \end{center}
 \end{table}

\begin{figure}[!ht]
  \centering
  \includegraphics[width=0.49\textwidth]{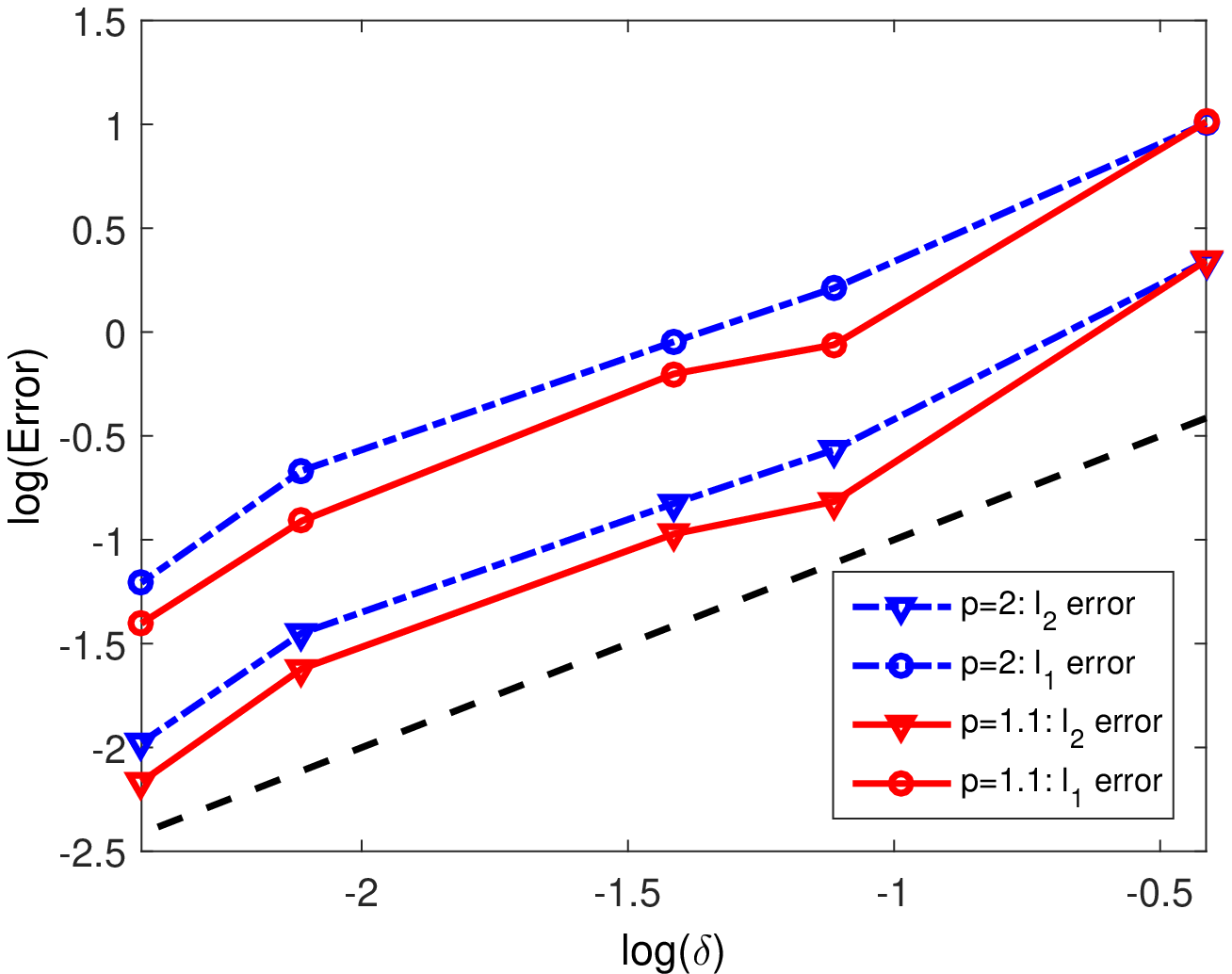}
  \includegraphics[width=0.49\textwidth]{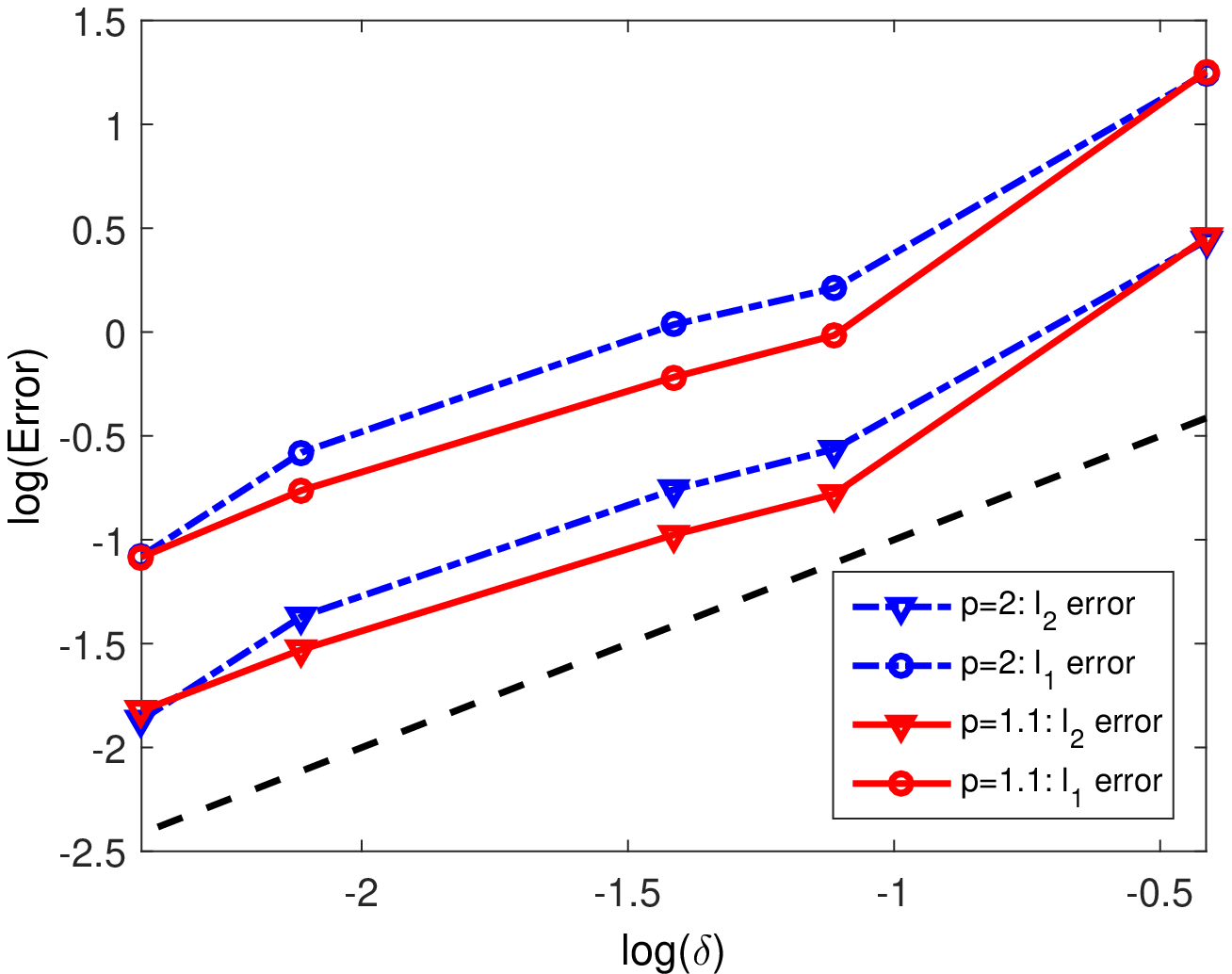}
  \caption{Second example: Absolute error versus noise level in logarithmic scales when $\eta/p=1/2^4$. Black dashed line is the reference line of order~$\delta$. Left: PM estimate; Right: MAP estimate. }\label{fig:ex2}
\end{figure}

\subsection{Extended discussion}
We establish $\ell^1$-norm error estimates for a Tikhonov type regularization with an $\ell^0$-term and a stabilizing convex functional. The error estimates are obtained by the variational inequality in Lemma \ref{lem_2:1}.
On the other hand, sparsity of $\xdag$ expresses, in some sense, a well-posed situation for solving the operator equation (\ref{eq:opeq}), although the range of $A$ is assumed to be non-closed indicating ill-posedness.
This effect becomes clear
if one considers the {\it conditional stability estimate}
\begin{equation*}
\|x-\xdag\|_{\1} \le \frac{1}{j(A,\Sigma(\xdag))}\,\|A(x-\xdag)\| \quad \mbox{for all} \quad x \in \Sigma(\xdag).
\end{equation*}
If it would be possible to ensure that the regularized solutions $\xad$ belong to the stability set  $\Sigma(\xdag)$ for sufficiently small $\delta>0$, then the combination of conditional stability and regularization
suggested originally in the article \cite{ChengYam00} (see also the more recent papers \cite{CHL14,HofYam10}) could be used for obtaining linear convergence rates when the regularization parameter
$\alpha_*>0$ is chosen a priori as $\alpha_*=\alpha_*(\delta)$ satisfying the inequalities
\begin{equation}\label{eq:apriori}
\underline c \,\delta^q \le\alpha_*(\delta) \le \overline c\,\delta^q
\end{equation}
for constants $0<\underline c \le \overline c < \infty$.
Evidently in contrast to Theorems~\ref{thm:main}, \ref{th_2:1} above, where $\alpha_*(\delta)\sim \delta^{p-1}$ is chosen, in the case (\ref{eq:apriori}) the quotient $\frac{\delta^q}{\alpha_*(\delta)}$ does not tends to zero as $\delta \to 0$, but remains in the interval $[\underline c,\overline c]$.
Unfortunately, the capability of our variety of $\ell^0$-regularization is only to ensure that $\|x_{\alpha_*}^\delta\|_{\ell^0} \le \overline K<\infty$ for all regularized solutions whenever $\delta>0$ is small enough. This, however, is far away from the requirement $x_{\alpha_*}^\delta \in \Sigma(\xdag)$ of the conditional stability approach, since the distribution of the $|I|$ nonzero components in $\xdag$ is a priori completely unknown.
We take some numerical evidence of the parameter choice rule (\ref{eq:apriori}) by choosing $\alpha_* = c \delta^{p}$ with $c=0.1$ and $c=0.2$ respectively. The numerical results are displayed in Figure \ref{fig:CY}. As one can observe, though the error in the logarithmic scale does not form a straight line as in Figures \ref{fig:ex1}-\ref{fig:ex1b}, the error is still acceptable.
 Generally, both constants $c = 0.1$ and $c = 0.2$
provide comparably the same accuracy.

\begin{figure}[!ht]
  \centering
  \includegraphics[width=0.49\textwidth]{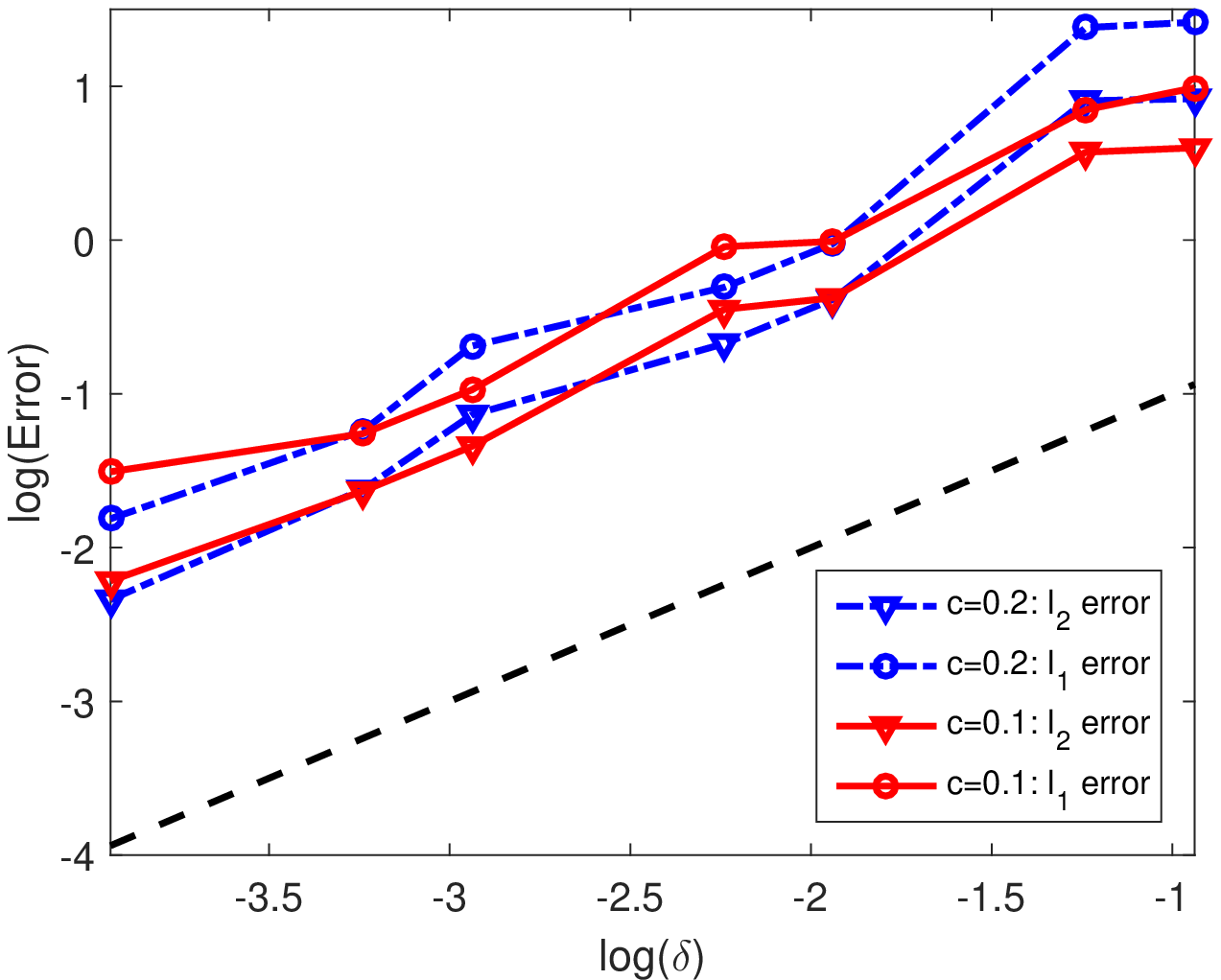}
  \includegraphics[width=0.49\textwidth]{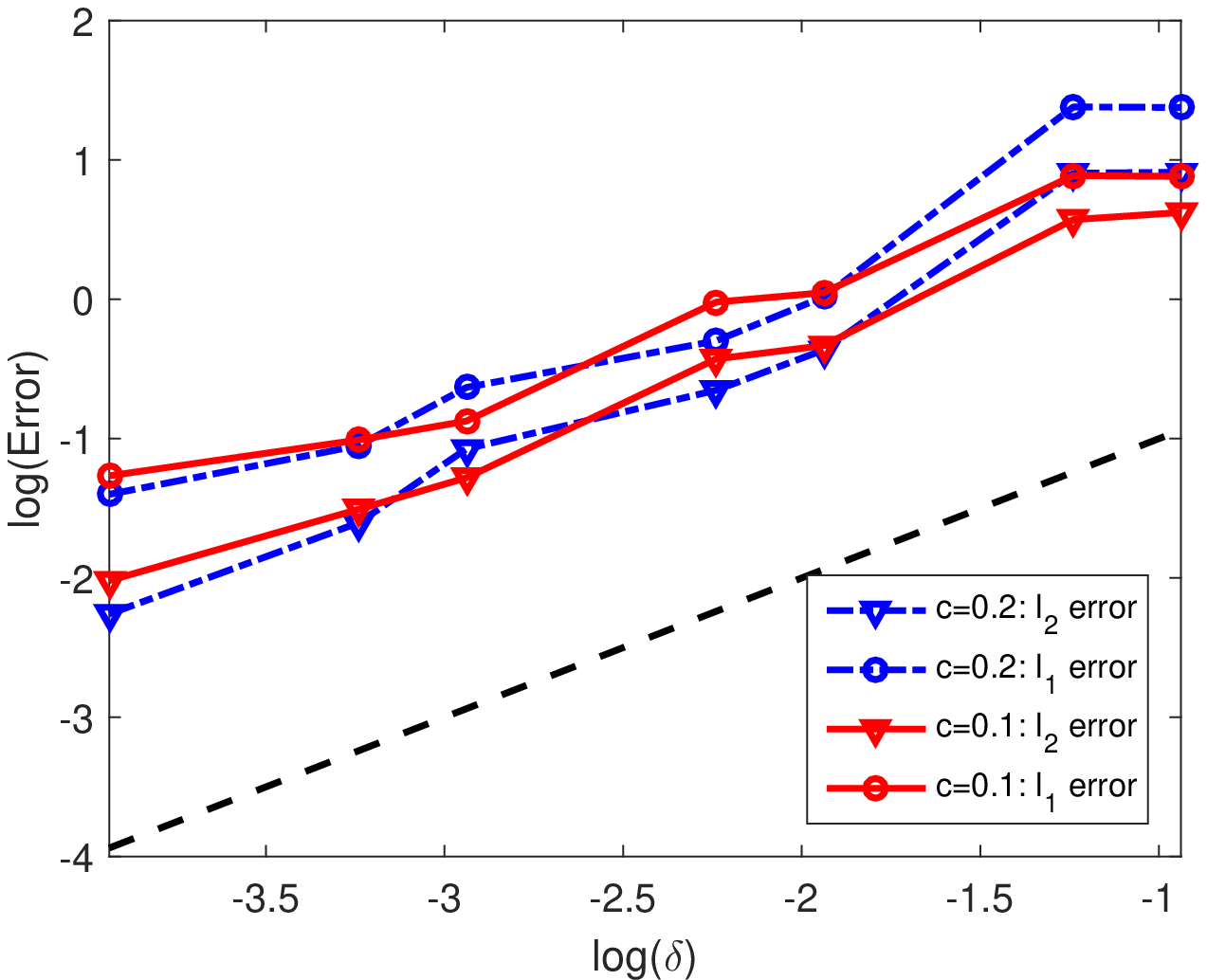}
  \caption{First example with the parameter choice rule $\alpha_* = c \delta^{p}$ and $c=0.1, 0.2$. Absolute error versus noise level in logarithmic scales when $p=2$ and $\eta/p=1/2^4$. Black dashed line is the reference line of order~$\delta$. Left: PM estimate, Right: MAP estimate.}\label{fig:CY}
\end{figure}

\section*{Acknowledgment}
W. Wang is supported by NSFC (No.11401257). S. Lu is supported by NSFC (No.11522108, 91730304), Shanghai Municipal Education Commission (No.16SG01) and Special Funds for Major State Basic Research Projects of China (2015CB856003). B.~Hofmann is supported by German Research Foundation under grant HO~1454/12-1. J. Cheng is supported by NSFC (key projects no.11331004, no.11421110002) and the Programme of Introducing Talents of Discipline to Universities (number B08018).

\end{document}